\documentclass[a4paper,10pt]{article}
\setcounter{secnumdepth}{1}
\setcounter{tocdepth}{1}
\usepackage{amsfonts,graphics,amsmath,amsthm,amsfonts,amscd,amssymb}
\usepackage{amsmath,latexsym,multicol,mathrsfs,stmaryrd,color}
\usepackage{epsfig,url,pxfonts,accents}
\usepackage{flafter,multirow}
\usepackage{galois}
\usepackage{fancyhdr}
\usepackage{hyperref}
\usepackage{textcomp}

\hypersetup{colorlinks=true, linkcolor=black}

\addtolength{\oddsidemargin}{-0.4in}
\addtolength{\evensidemargin}{-0.1in}
\addtolength{\textwidth}{0.8in}

\addtolength{\topmargin}{-0.6in}
\addtolength{\textheight}{0.8in}

\newcommand{\ovline}[1]{%
  \vbox{%
    \hrule height 0.6pt
    \kern0.22ex
    \hbox{%
      \kern-0.05em
      \ifmmode#1\else\ensuremath{#1}\fi
      \kern0em
    }
  }
}

\makeatletter

\def\jobis#1{FF\fi
  \def\predicate{#1}%
  \edef\predicate{\expandafter\strip@prefix\meaning\predicate}%
  \edef\job{\jobname}%
  \ifx\job\predicate
}

\makeatother

\if\jobis{proposal}%
\else
\fi

\usepackage[all]{xy}

\DeclareMathOperator{\Supp}{Supp}

\DeclareMathOperator{\Hom}{Hom}
\DeclareMathOperator{\pic}{Pic}


 \numberwithin{equation}{subsection}
 \numberwithin{footnote}{subsection}

 \newtheorem{lem}[subsection]{Lemma}
 \newtheorem{prop}[subsection]{Proposition}
 \newtheorem{thm}[subsection]{Theorem}
 \newtheorem*{thmA}{Theorem A}
 \newtheorem*{thmK}{Theorem K}

{
    \newtheoremstyle{upright}%
        {8pt plus2pt minus4pt}%
        {8pt plus2pt minus4pt}%
        {\upshape}%
        {}%
        {\bfseries\scshape}%
        {}%
        {1em}%
        {}%
\theoremstyle{upright}

 \newtheorem{rem}[subsection]{Remark}

 \newtheorem{exam}[subsection]{Example}

}

\newcommand{\x}{\mathscr}
\newcommand{\f}{\mathfrak}
\newcommand{\cl}{\mathcal}
\newcommand{\C}{\mathbb C}

\newcommand{\oo}{\mathscr O}

\newcommand{\PP}{\mathbf P}

\newcommand{\Q}{\mathbb Q}
\newcommand{\R}{\mathbb R}

\newcommand{\Z}{\mathbb Z}
\newcommand{\spec}{\mathrm{Spec}\hspace{0.5mm}}

\newcommand{\lie}{\mathrm{Lie}}
\newcommand{\aut}{\mathrm{Aut}}

\newcommand{\supp}{\mathrm{\Supp}}

\newcommand{\dual}{\check{~}}
\newcommand{\rddown}[1]{\left\lfloor{#1}\right\rfloor} 
\newcommand{\gm}{\mathbb{G}_m}
\newcommand{\ga}{\mathbb{G}_a}
\newcommand{\cp}{\!\comp\!}


\begin{document}
\title{Homogeneous fibrations on log Calabi-Yau varieties}
\author{Jinsong Xu}
\date{}
\maketitle

\begin{abstract}
We prove a structure theorem for the Albanese maps of varieties with $\Q$-linearly trivial log canonical divisors. Our start point is the action of a nonlinear algebraic group on a projective variety.
\end{abstract}

\section{Introduction}
A log Calabi-Yau variety is a normal projective variety $X$ with an effective $\Q$-divisor $\Delta$ such that $K_X + \Delta$ is $\Q$-Cartier and numerically equivalent to zero. Usually, we assume the pair $(X, \Delta)$ has certain mild singularities. In his work toward Iitaka's conjecture on sub-additivity of the Kodaira dimensions, Kawamata proved the following structure theorem for varieties with numerically trivial canonical divisor:

\begin{thmK}\cite[Theorem 8.2]{ka}
Let $X$ be a projective variety over $\C$ with only canonical singularities, and assume the canonical divisor $K_X$ is numerically trivial. Then

(1) $K_X \sim_\Q 0$, i.e., $mK_X$ is a Cartier divisor and linear equivalent to zero for some integer $m >0$.

(2) The Albanese morphism $f \colon X \to A$ is surjective and has connected fibres.

(3) There is an \'{e}tale cover $\pi \colon B \to A$ such that $B \times_A X \cong B \times F$, where $F$ is a fibre of $f \colon X \to A$.
\end{thmK}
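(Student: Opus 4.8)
The plan is to prove the three assertions in the order (1), (2), (3), relying on three external inputs: the abundance theorem in numerical dimension zero, Kawamata's resolution of Ueno's conjecture on Albanese maps of varieties of Kodaira dimension zero, and the canonical bundle formula together with the semipositivity of its moduli part. Throughout, $X$ is projective with canonical (hence klt, hence rational, hence Cohen--Macaulay) singularities and $K_X\equiv 0$. For (1), numerical triviality of $K_X$ gives $\kappa(X)\le 0$, so it is enough to prove $\kappa(X)\ge 0$; this is exactly the assertion that a klt variety with numerically trivial canonical divisor has Kodaira dimension $0$ --- the $\nu=0$ case of abundance, which in the smooth case follows from the existence of Ricci-flat K\"ahler metrics and the Beauville--Bogomolov decomposition (after a finite \'etale cover the canonical bundle becomes trivial). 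Granting $\kappa(X)=0$, choose $m>0$ with $h^0(X,\mathcal O_X(mK_X))=1$: the unique member of $|mK_X|$ is effective and numerically trivial, hence zero, so $mK_X\sim 0$, which is (1).

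For (2), the Albanese morphism $f\colon X\to A:=\alb(X)$ is well defined since canonical singularities are rational. Passing to a resolution $X'\to X$ --- which has the same Albanese variety and, because $X$ is canonical, the same Kodaira dimension $0$ --- Kawamata's theorem on the Albanese maps of varieties of Kodaira dimension zero gives that $X'\to A$ is surjective with connected fibres; surjectivity of $f$ follows at once, and each fibre of $f$, being the image of the corresponding connected fibre of $X'\to A$, is connected. A general fibre $F$ of $f$ is normal with canonical singularities --- in characteristic zero the general fibre of a dominant morphism from a canonical variety is canonical --- and, since $K_A=0$, adjunction gives $K_F=K_X|_F$, which is numerically trivial; applying (1) to $F$ yields $K_F\sim_\Q 0$.

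For (3), I apply the canonical bundle formula of Kawamata--Fujino--Mori to $f$, which is legitimate because $K_X$ is relatively $\Q$-linearly trivial over $A$ (the general fibre $F$ has $K_F\sim_\Q 0$): it produces an effective $\Q$-divisor $B_f\ge 0$ and a nef moduli $\Q$-divisor $M_f$ on $A$ with $K_X\sim_\Q f^*(K_A+B_f+M_f)=f^*(B_f+M_f)$. Then $f^*B_f+f^*M_f\equiv 0$, with $f^*B_f$ effective and $f^*M_f$ nef; intersecting with a general complete-intersection curve of an ample class forces $f^*B_f=0$, hence $B_f=0$, and then $f^*M_f\equiv 0$, hence $M_f\equiv 0$. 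So $f$ has empty discriminant and numerically trivial moduli part: the family has no degenerations and zero variation. It remains to upgrade this to \'etale-local triviality: zero variation makes the family birationally isotrivial, and combined with $B_f=0$ one shows all fibres are isomorphic to a fixed $F$ and that the relative isomorphism scheme $\mathrm{Isom}_A(F\times A,\,X)\to A$ is a smooth torsor under $\aut(F)$; since $F$ is not uniruled, $\aut^0(F)$ is an abelian variety, and a suitable finite \'etale cover $\pi\colon B\to A$ --- again an abelian variety --- trivialises this torsor (a finite \'etale cover kills the monodromy of the component group, an isogeny absorbs the $\aut^0(F)$-part), giving $B\times_A X\cong B\times F$.

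The step I expect to be the main obstacle is precisely this last one: passing from the numerical triviality of the moduli part (together with $B_f=0$) to an honest \'etale trivialisation of $f$. This requires the semipositivity / semistable-reduction package in a sharp form --- one must show that $M_f\equiv 0$ forces the variation of Hodge structure carried by the family, equivalently the moduli map, to be trivial and the relevant monodromy to be finite --- after which one exploits the rigidity of the base: every connected finite \'etale cover of an abelian variety is again an abelian variety (an isogeny), and torsors under an abelian variety over an abelian variety are split by multiplication maps. By contrast, parts (1) and (2) are essentially quotations of known structural theorems.
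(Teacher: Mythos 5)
Your strategy --- abundance in numerical dimension zero for (1), Kawamata's theorem on Albanese maps of $\kappa=0$ varieties for (2), and the canonical bundle formula plus semipositivity of the moduli part for (3) --- is essentially the original transcendental route of Kawamata and Ambro, and it is precisely the route this paper is written to avoid. The paper does not reprove Theorem K head-on: part (1) is taken as known (quoting \cite{ckm}), and parts (2) and (3) are recovered as the $\Delta=0$ case of Theorem \ref{main}. The mechanism there is entirely different: Proposition \ref{vector} gives an anti-linear isomorphism $H^0(X,T_X)\cong H^1(X,\oo_X)\dual$, so $B=\aut^0(X)$ is an abelian variety (Lemma \ref{abelian}) of dimension $q=h^1(X,\oo_X)$ acting faithfully on $X$; Theorem \ref{rmb} then produces the homogeneous fibration $X\to B/G$, identifies it with the Albanese morphism, and yields the splitting $B\times_A X\cong B\times F$ with the \'etale cover given explicitly by the isogeny $B\to B/G=A$. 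Everything in your step (3) --- discriminant, moduli part, isotriviality, torsor splitting --- is replaced by the transitivity of the $B$-action on the base.

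Judged as a self-contained argument, your proposal has a genuine gap, and it sits exactly where you say it does. From $B_f=0$ and $M_f\equiv 0$ you still need: (i) that the family is isotrivial, which is not formal --- it requires knowing that numerical triviality of the moduli part kills the variation of the attached Hodge structure (b-semi-ampleness of $M_f$, or finiteness of the monodromy), which is the hard semipositivity input of \cite{a1}; (ii) that \emph{all} fibres, not only those over the complement of a codimension-two locus, are isomorphic to $F$, since the discriminant $B_f$ only detects divisorial degeneration; and (iii) that the $\aut(F)$-torsor is split by a finite \'etale cover. Point (iii) is not automatic: $\pi_0(\aut(F))$ can be infinite (already for $F$ an abelian variety), so ``a finite \'etale cover kills the monodromy of the component group'' presupposes that the monodromy image is finite, which again must be extracted from a polarization or from the variation of Hodge structure; and splitting the residual $\aut^0(F)$-torsor by an isogeny uses projectivity of its total space together with Poincar\'e reducibility, which you should make explicit. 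None of these assertions is false, but each is a theorem rather than an observation, and together they constitute the proof of (3) rather than a reduction of it.
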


Later, this fundamental result was generalized to log Calabi-Yau varieties with Klt (Kawamata log terminal) singularities by Ambro:

\begin{thmA}\cite[Theorem 4.8]{a2}
Let $(X, \Delta)$ be a projective Klt pair over $\C$, and assume the log canonical divisor $K_X + \Delta$ is numerically trivial. Then

(1) $K_X + \Delta \sim_\Q 0$.

(2) The Albanese morphism $f \colon X \to A$ is surjective and has connected fibres.

(3) There is an \'{e}tale cover $\pi \colon B \to A$ such that $(X, \Delta) \times_A B \cong (F, \Delta_F) \times B$, where $F$ is a fibre of $f \colon X \to A$ and $\Delta_F$ is the restriction of $\Delta$ to $F$.
\end{thmA}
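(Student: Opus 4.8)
The strategy parallels Kawamata's proof of Theorem~K, with the boundary carried through the canonical bundle formula. Part~(1) is handled first and independently: since $K_X+\Delta\equiv 0$ it is nef of numerical dimension zero, and by the non-vanishing theorem in numerical dimension zero for klt pairs (Nakayama; for the log version see also Gongyo) we get $\kappa(K_X+\Delta)=0$. Then $|m(K_X+\Delta)|$ contains, for suitable $m$, an effective divisor, which, being numerically trivial, must vanish, so $K_X+\Delta\sim_\Q 0$. This conclusion is then available for the fibres occurring below.

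For (2) and (3), set $f\colon X\to A=\alb(X)$ and $Z=f(X)$. A general fibre carries the klt pair $(F,\Delta_F)$ with $K_F+\Delta_F\equiv 0$, hence $\sim_\Q 0$ by Part~(1); thus $f$ is an lc-trivial fibration, and the canonical bundle formula of Kawamata, Fujino--Mori and Ambro gives, over the normalisation $Z'$ of $Z$ (after a possible birational modification of the base), a decomposition $K_X+\Delta\sim_\Q f^*(K_{Z'}+B_{Z'}+M_{Z'})$ with boundary part $B_{Z'}\ge 0$ and moduli part $M_{Z'}$ nef. Since $K_X+\Delta\sim_\Q 0$ and $f$ is surjective onto $Z'$, $K_{Z'}+B_{Z'}+M_{Z'}\equiv 0$. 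To see $Z=A$, apply Ueno's structure theorem: $Z\subseteq A$ fibres over a base $W$ that is a point or of general type, with fibres translates of a fixed abelian subvariety; if $\dim W>0$, composing $X\to W$ and running the canonical bundle formula again (on a resolution of $W$) yields $K_W+B_W+M_W\equiv 0$ with $K_W$ big and $B_W+M_W$ pseudo-effective --- impossible. Hence $W$ is a point, $Z$ is a translate of an abelian subvariety, and since the image of the Albanese map generates $A$, $Z=A$; so $f$ is surjective. For connectedness, take the Stein factorisation $X\to A'\xrightarrow{h}A$: then $A'$ is normal and $K_{A'}=h^*K_A+R=R$ with $R\ge 0$ the ramification divisor, and the canonical bundle formula for $X\to A'$ gives $K_{A'}+B_{A'}+M_{A'}\equiv 0$, forcing each pseudo-effective summand, in particular $R$, to be numerically trivial, so $R=0$; hence $h$ is étale in codimension one, étale by purity of the branch locus, an isogeny, and the universal property of the Albanese provides a section of $h$, so $h$ is an isomorphism. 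This proves (2). Feeding $Z'=A$, $K_A=0$ back in, $0\sim_\Q f^*(B_A+M_A)$; since $B_A\ge 0$ and $M_A$ is nef, $B_A=0$ and $M_A\sim_\Q 0$.

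It remains to prove (3), following Kawamata's mechanism. Vanishing of $B_A$ means the discriminant of $f$ is empty: $f$ is equidimensional with reduced fibres over every codimension-one point of $A$. Vanishing of $M_A$ means the fibration has ``no moduli'': by the Hodge-theoretic construction of the moduli part, the semipositivity theorems of Fujita, Kawamata and Viehweg --- applied to the relevant reflexive direct images of relative log-pluricanonical sheaves, which is where Ambro's incorporation of $\Delta$ is used --- show that a suitable Hodge bundle on $A$ is numerically semipositive, while $M_A\sim_\Q 0$ forces its determinant to be numerically trivial, hence the bundle is numerically flat; on an abelian variety it then decomposes into $\mathrm{Pic}^0$-classes which, being unitary and integral, are torsion, so a finite étale cover $\pi\colon B\to A$ (again an abelian variety) makes it the trivial bundle. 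The resulting global relative log-pluricanonical sections, together with $B_A=0$, trivialise the family, giving $(X,\Delta)\times_A B\cong (F,\Delta_F)\times B$, which is (3).

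The canonical bundle formula bookkeeping, the positivity arguments, and Ueno's theorem are routine. The real obstacle is Part~(3): extracting from $M_A\sim_\Q 0$ that the fibration is genuinely isotrivial and becomes a trivial product after a finite étale base change. This needs the full strength of the semipositivity theory of Hodge bundles, the structure of numerically flat sheaves on abelian varieties, and --- crucially --- control of the automorphism group of the klt log Calabi-Yau fibre $(F,\Delta_F)$, in order to rule out obstructions (monodromy, and torsors under $\mathrm{Aut}^0(F,\Delta_F)$) that no finite étale cover could otherwise remove; it is precisely the analysis of the action of this possibly nonlinear automorphism group that the body of the paper is built to carry out.
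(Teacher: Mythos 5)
Your route is the classical Kawamata--Ambro one (canonical bundle formula, Ueno's structure theorem, semipositivity of Hodge bundles), which is precisely the transcendental approach this paper is written to avoid. The paper instead takes $K_X+\Delta\sim_\Q 0$ as its starting point (citing \cite{ckm} for part (1)), proves that $B=\aut^0(X,\lceil\Delta\rceil)$ is an abelian variety of dimension $q=h^1(X,\oo_X)$ via a duality between logarithmic vector fields and one-forms (Lemma \ref{lie}, Proposition \ref{vector}, Lemma \ref{abelian}), and then obtains the homogeneous fibration, its identification with the Albanese morphism, and the splitting \'etale cover $\pi\colon B\to A$ all at once from the structure theory of abelian-variety actions (Theorem \ref{rmb}). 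Within your own framework, parts (1) and (2) are essentially sound: non-vanishing in numerical dimension zero gives (1), and the Ueno/Stein-factorization/purity argument for (2) is standard and closes correctly.

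Part (3) is where your proposal has a genuine gap, and you acknowledge as much. Two specific problems. First, the one concrete mechanism you offer --- that a numerically flat Hodge bundle on $A$ ``decomposes into $\mathrm{Pic}^0$-classes which, being unitary and integral, are torsion'' --- is wrong as stated: numerically flat bundles on an abelian variety need not split into line bundles (a nontrivial extension of $\oo_A$ by $\oo_A$ is numerically flat), and elements of $\pic^0(A)$ are generally not torsion. What is actually used in Kawamata's argument is that the monodromy of the underlying polarized variation of Hodge structure is finite, because the period map is constant and the monodromy is simultaneously unitary and defined over $\Z$; that is a statement about the local system, not about $\pic^0$. Second, and more seriously, even granting finite monodromy and $B_A=0$, the step ``the resulting sections trivialise the family'' is asserted rather than proved: passing from constant moduli to an actual isomorphism $(X,\Delta)\times_AB\cong(F,\Delta_F)\times B$ requires constructing an isomorphism over the generic point and extending it over all of $B$, controlling torsors under $\aut^0(F,\Delta_F)$ and possible multiple or degenerate fibres. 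That extension is the substance of Ambro's Theorem 4.8, and in the present paper it is exactly what Theorem \ref{rmb}(2)--(3) delivers by elementary group-theoretic means. As written, your proposal reduces the theorem to its hardest step rather than proving it.
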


The idea of their proof is to first establish that the morphism $f \colon X \to A$ is ``isotrivial" so that the fibres vary in constant moduli, then they construct an isomorphism from an \'{e}tale neighborhood of the generic fibre to a product and extend this isomorphism over the whole. The techniques involved therein depend on some transcendental methods such as variation of Hodge structures.

In this paper, we give an alternative approach to Ambro's generalization (see Theorem \ref{result} below), which depends less on transcendental techniques. Indeed, apart from the standard characteristic zero tools (resolution of singularities, vanishing theorems, etc.), the only analytic input used in our argument is the Hodge duality for Klt spaces \cite{gkp}. Inspired by \cite{a1}, \cite{br1, br2} and \cite{ka}, we observed that on a log variety with torsion log canonical divisor, there is a duality between holomorphic one-forms and certain logarithmic vector fields. This enables us to make use of the actions of nonlinear algebraic groups to produce an isotrivial fibration. We also determine an explicit splitting \'etale cover $\pi \colon B \to A$. Our method allows us to see more details on the structure of Albanese morphism of irregular Calabi-Yau type varieties.
~\\

The property ``$K_X + \Delta \equiv 0$ implies $K_X + \Delta \sim_\Q 0$" has been established under more general assumption, see for example \cite{ckm}. We therefore start with $K_X + \Delta \sim_\Q 0$.

\begin{thm} \label{result}
Let $(X, \Delta)$ be a projective Klt pair over $\C$, and assume that $K_X + \Delta \sim_\Q 0$. Then

(1) The Albanese morphism $f \colon X \to A$ is a homogeneous fibration.

(2) There exists an abelian variety $B$, and an \'etale morphism $\pi \colon B \to A$ such that $B \times_A X \cong B \times F$, where $F$ is a fibre of $f$. Moreover, $X \cong B \times^G F$, where $G = \ker(\pi)$.
\end{thm}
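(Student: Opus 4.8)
The plan is to build, directly from $K_X+\Delta\sim_{\mathbb Q}0$, an abelian variety $B$ acting on $(X,\Delta)$ that covers the Albanese torus $A$ and acts transitively on it; once this is available, parts (1) and (2) come out of routine bookkeeping with quotients. To produce the automorphisms, fix $m>0$ with $m(K_X+\Delta)$ Cartier and a nowhere-vanishing section $\sigma$ of $\mathcal O_X(m(K_X+\Delta))\cong\mathcal O_X$ (a logarithmic $m$-canonical form). Following the duality principle from the introduction, $\sigma$ should set up, via contraction, a perfect pairing between $H^0(X,\Omega^1_X)$ and a space $V$ of logarithmic vector fields on $(X,\Delta)$, so that $\dim_{\mathbb C}V=h^0(X,\Omega^1_X)=\dim A=:q$, while $[V,V]=0$ because the dual $1$-forms are pulled back from $A$, where they are translation invariant. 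Perfectness forces each nonzero $v\in V$ to vanish nowhere on $X$: if $v(p)=0$ then every function $\iota_v\omega$, being a global constant, would vanish identically. Being tangent to $\Delta$, these $v$ lie in $\mathrm{Lie}\,\mathrm{Aut}^0(X,\Delta)$, and I would check that the natural homomorphism $\rho\colon\mathrm{Aut}^0(X,\Delta)\to\mathrm{Aut}^0(A)=A$ (action by translations; Blanchard / Nishi--Matsumura) has differential carrying $V$ isomorphically onto $\mathrm{Lie}\,A$; in particular $\rho$ is surjective.

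Next I would prove that $\mathcal A:=\mathrm{Aut}^0(X,\Delta)$ contains no $\mathbb G_a$ or $\mathbb G_m$, hence is an abelian variety. If some one-parameter subgroup $\mathbb G$ of that type acted, the closures of its general orbits would sweep out $X$, so after a $\mathbb G$-equivariant resolution one obtains a fibration $\widetilde X\to W$ whose general fibre is a $\mathbb P^1$ on which $\mathbb G$ acts fibrewise; over a dense open of $W$ this is a $\mathbb P^1$-bundle $\mathbb P(L_1\oplus L_2)$, and its only $\mathbb G$-invariant horizontal divisors are the two coordinate sections (just one for $\mathbb G_a$). Writing $\mu^*(K_X+\Delta)=K_{\widetilde X}+\widetilde\Delta$, klt-ness makes every coefficient of $\widetilde\Delta$ strictly less than $1$; but restricting to a general fibre gives $K_{\mathbb P^1}+\widetilde\Delta|_{\mathbb P^1}\equiv0$, so $\widetilde\Delta|_{\mathbb P^1}$ has degree $2$ while being supported on at most two points with coefficients $<1$ — impossible. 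Granting this, $\rho\colon\mathcal A\twoheadrightarrow A$ with $\mathcal A$ an abelian variety, so by Poincar\'e reducibility I can pick an abelian subvariety $B\subseteq\mathcal A$ with $B+\ker\rho=\mathcal A$ and $B\cap\ker\rho$ finite; then $\pi:=\rho|_B\colon B\to A$ is an isogeny — in particular étale — with finite kernel $G:=\ker\pi$. Since $B$ acts on $X$ with $f$ equivariant for the translation action on $A$ through $\pi$, and $B$ acts transitively on $A$, the morphism $f$ is a homogeneous fibration, which is (1).

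For (2), set $F:=f^{-1}(o)$ for a point $o\in A$, with the restricted boundary; $G\subseteq B$ preserves $F$ because it maps to $0$ under $\pi$. The multiplication morphism $B\times F\to X$, $(b,y)\mapsto b\cdot y$, is invariant under the free action $g\cdot(b,y)=(bg^{-1},gy)$ (using that $B$ is commutative), hence descends to $B\times^G F\to X$; a short computation using surjectivity of $\pi$ shows this is bijective, and a bijective proper morphism onto a normal variety in characteristic zero is an isomorphism, so $X\cong B\times^G F$. The same computation gives $B\times_A X\cong B\times F$ over $B$.

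I expect the real obstacle to be the first step: making the duality between holomorphic $1$-forms and logarithmic vector fields precise and \emph{perfect} when $(X,\Delta)$ is merely klt, so that $X$ may be singular and $\Delta$ a genuine $\mathbb Q$-divisor. One must isolate the correct reflexive sheaf of logarithmic vector fields (on $X$, or on a $\mathbb Q$-factorial dlt or small modification), show that contraction against $\sigma$ is well defined and nondegenerate there, and — most delicately — verify that the vector fields produced are genuinely tangent to $\Delta$, so that they integrate inside $\mathrm{Aut}^0(X,\Delta)$ rather than merely $\mathrm{Aut}^0(X)$. The adjunction argument in the second step is comparatively soft, but still needs the orbit $\mathbb P^1$'s to be chosen general enough for the bound on the coefficients of $\widetilde\Delta$ to apply.
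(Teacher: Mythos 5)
Your overall strategy coincides with the paper's: produce logarithmic vector fields from $K_X+\Delta\sim_{\Q}0$ by a duality with one-forms, deduce that the connected automorphism group preserving $\lceil\Delta\rceil$ is an abelian variety (your $\ga/\gm$-exclusion by adjunction on orbit closures is essentially the content of the results the paper quotes from Ambro and Iitaka), and then run the structure theory of abelian-variety actions to split $X$ after an isogeny; your endgame for (2) (counting solutions of $by=x$ and invoking Zariski's main theorem) is also the paper's.

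The genuine gap is the step you yourself flag as "the real obstacle": the duality. The paper does \emph{not} establish a perfect contraction pairing between $H^0(X,\Omega^1_X)$ and logarithmic vector fields, and your proposed mechanism does not obviously produce one --- for $m>1$, contracting $v$ into a section of $\oo_X(m(K_X+\Delta))$ does not yield a function against each $1$-form, and even for $m=1$ on a smooth Calabi--Yau variety the nondegeneracy of $H^0(T_X)\times H^0(\Omega^1_X)\to\C$ is a Hodge-theoretic fact, not a formal consequence of triviality of $\omega_X$. What the paper actually proves (Proposition \ref{vector}, via Lemma \ref{am}) is only the \emph{equality of dimensions} $h^0(X,T_X(-\log\lceil\Delta\rceil))=h^1(X,\oo_X)=q$, obtained by passing to the index-one cyclic cover $Z\to Y$ over an equivariant resolution, applying Hodge duality $H^0(Z,\Omega_Z^{[d-1]})\cong H^{d-1}(Z,\oo_Z)$ on the Klt space $Z$ together with Serre duality, decomposing $\pi_*\Omega^{[p]}_Z$ into eigensheaves, and using Kawamata--Viehweg vanishing to descend to $X$. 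That dimension count, combined with Lemma \ref{lie}, Lemma \ref{abelian} and Theorem \ref{rmb}, already forces $B=\aut^0(X,\lceil\Delta\rceil)$ to be a $q$-dimensional abelian variety mapping onto $A$ with finite kernel; so your appeals to perfectness of the pairing, to the nowhere-vanishing of each $v\in V$, to $[V,V]=0$, and to Poincar\'e reducibility are all unnecessary, but the replacement input --- the dimension count --- is exactly the technical heart you have left unproved. A secondary gap: in (1) you conclude that $f$ is a homogeneous fibration from equivariance and transitivity alone, but the definition also requires $f_*\oo_X=\oo_A$; surjectivity and connectedness of the Albanese fibres are not automatic, and the paper extracts them by comparing the Stein-factorized homogeneous fibration $g\colon X\to A'$ of Theorem \ref{rmb} with the Albanese morphism and showing that the resulting isogeny $A\to A'$ has degree one.
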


We refer the reader to Section 3 and 4 for more precise statements and related definitions.

The organization of this paper is as follows: in Section 2, we discuss some preliminaries on reflexive sheaves and automorphism groups; in Section 3, we study algebraic group actions by abelian varieties, which serves as a key tool for our method; next, in Section 4, we first prove two key results on vector fields, then we prove the main result Theorem \ref{main}; finally, in Section 5, we illustrate some examples as applications and explanations.~\\

\textbf{Acknowledgements.} I would like to express my sincere thanks to Florin Ambro, Michel Brion, Yujiro Kawamata and Chenyang Xu for their invaluable suggestions and discussions. I am also grateful to Baohua Fu for warm encouragement and support. Finally, I would like to thank Osamu Fujino for useful discussion during my visit to Kyoto university in 2014.

This work is partially supported by NNSFC (No.11401358).

\section{Preliminaries}
\subsection{Notations}
We work over $k = \C$, the field of complex numbers.

A scheme always means a scheme over $k$.

A variety is an integral and separated scheme of finite type over $k$.

A log variety is a pair $(X, \Delta)$, where $X$ is a normal variety and $\Delta$ is an effective $\Q$-divisor such that $K_X + \Delta$ is $\Q$-Cartier.

For a Weil $\R$-divisor $D = \sum a_iD_i$, we set
$$
\lceil D \rceil := {\textstyle \sum} \lceil a_i \rceil D_i, \hspace{4mm} \lfloor D \rfloor := {\textstyle \sum} \lfloor a_i \rfloor D_i, \hspace{3mm} \mbox{ and } \hspace{1mm} \{D\} := D - \lfloor D \rfloor.
$$

We refer to \cite{km} for notions of Klt singularities et al.

\subsection{Reflexive sheaves}
Let $X$ be a normal variety, and let $\Delta \geq 0$ be a reduced Weil divisor. We consider $\Delta$ as a closed subscheme of $X$, endowed with reduced subscheme structure. Choose a big open set $j \colon U \hookrightarrow X$ such that $(U, \Delta|_U)$ is log smooth (i.e., $U$ is nonsingular and $\Delta|_U$ has simple normal crossing support). Define
$$
\Omega^{[p]}_X(\log\Delta) := j_*\Omega^p_U(\log(\Delta|_U)).
$$
This definition does not depend on the choice of the big open set $U$, and is called the \emph{sheaf of logarithmic $p$-forms} along $\Delta$. $\Omega^{[p]}_X(\log\Delta)$ is a reflexive sheaf containing $\Omega_{X}^{[p]} = j_*\Omega^p_U$. We set
$$
T_X(-\log\Delta) := \Omega^{[1]}_X(\log\Delta)\dual = \x{H}om(\Omega^{[1]}_X(\log\Delta), \oo_X).
$$
It is a reflexive subsheaf of the tangent sheaf $T_X := \x{H}om(\Omega^1_X, \oo_X)$. Sections of $T_X(-\log\Delta)$ are just $k$-derivations of the sheaf $\oo_X$ to itself that send the ideal $\cl{I}$ of $\Delta$ to $\cl{I}$.

\subsection{The automorphism group functor}
Let $X$ be a proper $k$-scheme, $i \colon \Delta \hookrightarrow X$ a closed subscheme. Let
$$
A_X \colon (\mathrm{Schemes}/k) \to \mathbf{Gr}
$$
be the automorphism functor for $X$ (cf. \cite[Section 3]{mo}). Given a $k$-scheme $S$, we define
$$
A_{X,\Delta}(S) = \{\sigma \in A_X(S)| \sigma \cp i_S \colon \Delta_S \to X_S \mbox{ factors through } \Delta_S \xrightarrow{i_S} X_S\},
$$
where $X_S, \Delta_S, i_S$ are base change of $X, \Delta, i$ respectively.

This defines a closed subfunctor $A_{X, \Delta}$ of $A_X$ from the category of $k$-schemes to the category of groups.

From \cite{mo} we know that $A_X$ is representable by a group scheme $\aut(X)$, locally of finite type over $k$. Standard argument shows that $A_{X, \Delta}$ is representable by a closed subgroup scheme $\aut(X, \Delta) \subset \aut(X)$. Closed points of $\aut(X, \Delta)$ correspond precisely to automorphisms $\sigma \colon X \to X$ that preserve the closed subscheme $i \colon \Delta \hookrightarrow X$.

Now let $X$ be a normal proper variety over $k$, and let $\Delta \geq 0$ be a reduced Weil \emph{divisor}. It is well known that $\lie(\aut(X))$, the tangent space of $\aut(X)$ at the identity morphism, can be identified canonically with the space of global vector fields $H^0(X, T_X)$. We wish to compute the tangent space of subgroup scheme $\aut(X, \Delta)$. Remember that we consider $\Delta$ as a \emph{reduced} closed subscheme of $X$.

\begin{lem} \label{lie}
The subspace
$$
H^0(X, T_X(-\log \Delta)) \subset H^0(X, T_X)
$$
is canonically isomorphic to $\lie(\aut(X, \Delta))$, the tangent space of $\aut(X, \Delta)$ at the identity morphism.
\end{lem}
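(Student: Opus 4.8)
The plan is to describe both tangent spaces using the ring of dual numbers $R:=k[\epsilon]/(\epsilon^2)$ and to match them by means of the characterization of sections of $T_X(-\log\Delta)$ recalled above. Recall the canonical identification $\lie(\aut(X))=H^0(X,T_X)$ (cf. \cite{mo}): a tangent vector at $\mathrm{id}\in\aut(X)$ is an $R$-point of $\aut(X)$ reducing modulo $\epsilon$ to $\mathrm{id}$, i.e. an automorphism $\Phi$ of $X_R:=X\times_k\Spec R$ over $\Spec R$ with $\Phi\otimes_R k=\mathrm{id}_X$. Writing $\oo_{X_R}=\oo_X\oplus\epsilon\,\oo_X$, every such $\Phi$ is given by $\Phi^\#(a+\epsilon b)=a+\epsilon(b+D(a))$ for a unique $k$-linear $D\colon\oo_X\to\oo_X$, and $\Phi^\#$ is a homomorphism of sheaves of $R$-algebras exactly when $D$ is a $k$-derivation of $\oo_X$, i.e. $D\in H^0(X,T_X)$. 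Since the inclusion $\aut(X,\Delta)\hookrightarrow\aut(X)$ is a monomorphism of group schemes, it is injective on tangent spaces at the identity, and a derivation $D\in H^0(X,T_X)$ lies in the image of $\lie(\aut(X,\Delta))$ if and only if the corresponding morphism $\Spec R\to\aut(X)$ factors through $\aut(X,\Delta)$.

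Next I would unwind this factorization condition. By the definition of the functor $A_{X,\Delta}$, it holds precisely when $\Phi\circ i_R\colon\Delta_R\to X_R$ factors through the closed immersion $i_R\colon\Delta_R\to X_R$, where $\Delta$ carries its reduced structure, $\Delta_R=\Delta\times_k\Spec R$, and the ideal sheaf of $\Delta_R$ in $\oo_{X_R}$ is $\cl I_R:=\cl I\oplus\epsilon\,\cl I$ with $\cl I=\cl I_\Delta\subset\oo_X$. Since $i_R$ is a closed immersion and $\Phi$ an automorphism, this is equivalent to the inclusion $\Phi^\#(\cl I_R)\subseteq\cl I_R$. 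Now for local sections $a,b$ of $\cl I$ one has $\Phi^\#(a+\epsilon b)=a+\epsilon(b+D(a))$, which lies in $\cl I_R$ if and only if $D(a)\in\cl I$; hence $\Phi^\#(\cl I_R)\subseteq\cl I_R$ holds if and only if $D(\cl I)\subseteq\cl I$. By the characterization recalled above this last condition says exactly that the derivation $D$ is a global section of $T_X(-\log\Delta)$. (That characterization is literally the definition on the big log-smooth open set $U$; it extends to all of $X$ because $\cl I_\Delta=\oo_X(-\Delta)$ is reflexive when $\Delta$ is reduced and $X$ is normal, so $H^0(X,-)=H^0(U,-)$ for the sheaves in play. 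Alternatively one simply restricts the whole discussion to $U$.) Therefore the image of the injection $\lie(\aut(X,\Delta))\hookrightarrow\lie(\aut(X))=H^0(X,T_X)$ is exactly $H^0(X,T_X(-\log\Delta))$, and since this injection is the restriction of the canonical isomorphism for $\aut(X)$, the resulting isomorphism is canonical.

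I do not expect a genuine geometric obstacle here; the delicate points are bookkeeping ones. First, one must carefully justify that ``$\Phi\circ i_R$ factors through $i_R$'' is equivalent to $\Phi^\#(\cl I_R)\subseteq\cl I_R$ (using that $i_R$ is a closed immersion with ideal $\cl I_R=\cl I\otimes_k R$ and that $\Phi^\#$ is an isomorphism). Second, one must keep track of the \emph{reduced} scheme structure on $\Delta$: this is exactly what makes the condition on $D$ come out as $D(\cl I)\subseteq\cl I$, the defining condition for sections of $T_X(-\log\Delta)$, rather than a condition involving the ideal of some nonreduced thickening of $\Delta$. Finally, it is worth noting that the isomorphism is compatible with brackets --- $D(\cl I)\subseteq\cl I$ and $D'(\cl I)\subseteq\cl I$ imply $[D,D'](\cl I)\subseteq\cl I$ --- so $H^0(X,T_X(-\log\Delta))$ is a Lie subalgebra of $H^0(X,T_X)$, matching $\lie(\aut(X,\Delta))\subseteq\lie(\aut(X))$.
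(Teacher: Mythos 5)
Your proposal is correct and follows essentially the same route as the paper: identify tangent vectors with $k[\varepsilon]$-points reducing to the identity, write the corresponding automorphism as $a\mapsto a+\varepsilon\partial a$, and observe that preservation of the ideal $\cl I\oplus\varepsilon\cl I$ is equivalent to $\partial(\cl I)\subseteq\cl I$, i.e.\ to $\partial\in H^0(X,T_X(-\log\Delta))$. Your additional remarks (the reflexivity argument for extending the derivation characterization from the big open set, and the compatibility with Lie brackets) are correct refinements that the paper leaves implicit.
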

\begin{proof}
An element in $\lie(\aut(X, \Delta)) = A_{X, \Delta}(\spec k[\varepsilon])$ is an automorphism of the $k[\varepsilon]$-algebra $\oo_X + \varepsilon\oo_X$ that maps $\cl{I} + \varepsilon\cl{I}$ to $\cl{I} + \varepsilon\cl{I}$, where $\cl{I}$ is the ideal of $\Delta$, and reduce to the identity morphism modulo $\varepsilon$. Therefore if $a \in \oo_X$, we have $\sigma(a) = a + \varepsilon\partial a$ for some $\partial a \in \oo_X$. It is easy to verify that $a \mapsto \partial a$ is a $k$-derivation of $\oo_X$ to itself and maps $\cl{I}$ to $\cl{I}$. Conversely, every $k$-derivation $\partial \in H^0(X, T_X(-\log \Delta))$ defines a $k[\varepsilon]$-automorphism $\sigma \colon \oo_X + \varepsilon \oo_X \to \oo_X + \varepsilon \oo_X$ by sending $a + \varepsilon b$ to $a + \varepsilon (\partial a + b)$. We verify readily that $\sigma \in \lie(X, \aut(X, \Delta))$.
\end{proof}

\section{Group actions by abelian varieties}
Following \cite{br2}, a morphism $f \colon X \to A$ from a projective variety $X$ to an abelian variety $A$ is called a \emph{homogeneous fibration} if $f_*\oo_X = \oo_A$ and $f$ is isomorphic as an $A$-scheme to its pull-back by any translation in $A$. We see immediately that the fibers of a homogeneous fibration are connected and isomorphic to each other.

The theorem below is of fundamental importance for algebraic group actions by abelian varieties. It is a generalization of the well-known Poincar\'e's complete reducibility theorem for abelian varieties. Similar results hold for actions of connected nonlinear algebraic groups (i.e., algebraic groups that are not linear), we refer to \cite{br1,br2,bsu} for further results.~\\

We write the group operation of an abelian variety \emph{multiplicatively} except in Example \ref{klt}.

\begin{thm} \label{rmb}
Let $B$ be an abelian variety, $\alpha \colon B \times X \to X$ be a faithful action on a normal projective variety $X$. Then

(1) There exists a homogeneous fibration $f \colon X \to A$, where $A$ is a quotient of $B$ by a finite subgroup, and $f$ is equivariant.

(2) Let $F$ be the fibre of $f$ over $1 \in A$. Then the following diagram is commutative
$$
\xymatrix{
B \times F \ar@/_/[ddr]_\alpha \ar@/^/[drr]^{p_1}
\ar@{.>}[dr]_\sigma \\
& B \times_A X \ar[d]^{\pi_B} \ar[r]
& B \ar[d]^\pi \\
& X \ar[r]^f & A }
$$
and the induced morphism $\sigma \colon B \times F \to B \times_AX$ is an isomorphism.

In particular, $\alpha \colon B \times F \to X$ is an \'{e}tale cover.

(3) Let $G = \ker(\pi)$, then $G$ acts on $F$ and $X \cong B \times^G F$.

\end{thm}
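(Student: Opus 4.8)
The plan is to exploit the structure theory of actions of abelian varieties, following the line of \cite{br1,br2}. The first observation is that since $B$ is an abelian variety, any orbit map $b \mapsto b \cdot x$ is a morphism from $B$ to $X$, hence its image is the translate of an abelian subvariety, and the stabilizer $B_x$ is a subgroup scheme of $B$; since $B$ is proper and the action is faithful, I would show that the generic stabilizer is a \emph{finite} subgroup $G_0 \subset B$, independent of $x$ on a dense open set, and in fact (using that $B$ is commutative, so stabilizers of points in one orbit are conjugate hence equal) that $G_0$ is normal and contained in every stabilizer. Set $A := B/G_0$; this is again an abelian variety. The next step is to produce $f \colon X \to A$: I would take the morphism $X \to \mathrm{Chow}(X)$ (or the quotient by the foliation defined by the image of $\lie(B) \to H^0(X,T_X)$) sending a general point to its $B$-orbit closure, which is a translate of $A$; after Stein factorization and normalization this yields $f \colon X \to A'$ with $f_*\oo_X = \oo_{A'}$, and one checks $A'$ is an abelian variety with a transitive $B$-action, hence $A' = B/G_0 = A$ after identifying base points. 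Equivariance is built into the construction, and invariance under translation (the homogeneous fibration property) follows because translations in $A$ lift to translations in $B$, which act on $X$.

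For part (2), once $f$ is equivariant the diagram defines $\sigma \colon B \times F \to B \times_A X$ via $(b, x) \mapsto (b, b\cdot x)$; its inverse is $(b, y) \mapsto (b, b^{-1}\cdot y)$, which is well-defined precisely because $f(b^{-1}\cdot y) = \pi(b)^{-1} f(y) = \pi(b)^{-1}\pi(b) = 1$, so $b^{-1}\cdot y \in F$. Thus $\sigma$ is an isomorphism of schemes over $B$, and composing with $\pi_B$ shows $\alpha|_{B\times F} \colon B\times F \to X$ is the base change of the isogeny $\pi \colon B \to A$ along $f$, hence \'etale (and finite). For part (3), set $G = \ker(\pi) = G_0$, a finite group. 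It acts on $B \times F$ by $g \cdot (b, x) = (bg^{-1}, g\cdot x)$ — here I use that $G_0$ stabilizes $F$ as a set, since $G_0$ fixes $1 \in A$ — and this action is free because it is free on the first factor. The quotient $(B\times F)/G$ exists as a variety, and the map $\alpha \colon B\times F \to X$ is $G$-invariant and identifies $X$ with this quotient: it is surjective with $G$-orbits as fibres (two pairs have the same image iff they differ by an element of $G_0$, by faithfulness), giving $X \cong B \times^{G} F$.

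The main obstacle is part (1): rigorously establishing the existence of a \emph{common} finite generic stabilizer $G_0$ and building the quotient fibration $f$ with $f_*\oo_X = \oo_A$ onto an abelian variety, rather than onto a merely normal base. The cleanest route is probably to invoke the general theory (for a connected algebraic group $H$ acting on a normal projective $X$ there is an $H$-equivariant fibration onto an abelian variety which is universal among such, by \cite{br1,br2}), apply it to $H = B$, and then use faithfulness plus commutativity of $B$ to identify the base with $B/G_0$ and to see the fibration is homogeneous; the remaining parts (2) and (3) are then essentially formal diagram-chasing and descent along a finite free quotient.
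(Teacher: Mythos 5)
Your parts (2) and (3) are essentially correct, and your explicit inverse $(b,y)\mapsto(b,b^{-1}y)$ for $\sigma$ is cleaner than the paper's own argument, which instead counts the solutions of $bx=y$ to show $\sigma$ is finite and birational onto $B\times_AX$ and then invokes normality. The problem is part (1), where your proposal contains a genuine error rather than just a gap. You identify the finite subgroup $G=\ker(\pi)$ with the generic stabilizer $G_0$ of a point and set $A:=B/G_0$. But for a faithful action of an abelian variety on a normal projective variety in characteristic zero the generic stabilizer is \emph{trivial} --- this is precisely the paper's Lemma \ref{stab} --- so your recipe always returns $A=B$ and $G=1$, which is false in general: for a bielliptic surface $X=(E\times F)/G$ with $B=E$ acting through the first factor, general points have trivial stabilizer, yet the homogeneous fibration is $X\to E/G$ with $G\neq 1$. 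The group $G$ is the stabilizer of a fibre $F$ as a set, not of any point, and it cannot be recovered from point stabilizers.

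The second error in part (1) is the construction of $f$. The map to $\mathrm{Chow}(X)$ sending $x$ to $\overline{Bx}$ (equivalently, the quotient by the foliation generated by the image of $\lie(B)\to H^0(X,T_X)$) has the \emph{orbits} as its fibres, so its image is the orbit space, on which $B$ acts trivially --- in the bielliptic example this is $F/G\cong\PP^1$, not an abelian variety, and certainly not one carrying a transitive $B$-action. The orbits are multisections of the homogeneous fibration (each orbit surjects onto $A$ under $f$), transverse to the fibres; they are not the fibres. The correct construction, sketched in the paper following Nishi, Matsumura and Brion, is dual in nature: fix an ample $L$ on $X$, form the rigidified line bundle $\alpha^*L\otimes p_2^*L^{-1}$ on $B\times X$, obtain an equivariant morphism $X\to\pic(B)$ onto a component of $\pic(B)$, and take its Stein factorization. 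Your fallback of citing \cite{br1,br2} for part (1) is legitimate (the paper does essentially the same), but even then you must abandon the identification of the base with $B$ modulo the generic stabilizer; with part (1) granted as a black box, your arguments for (2) and (3) do go through.
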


In statement (2), any other fibre will also work, we choose the one over the identity element $1 \in A$ for convenience. The symbol $B \times^G F$ means the quotient of $B \times F$ by the diagonal action of $G$:
$$
g(b, x) = (bg^{-1}, gx), \hspace{3mm} g \in G, ~(b, x) \in B \times F.
$$
We need a lemma

\begin{lem} \label{stab}
Keep the assumption in Theorem \ref{rmb}. For a general point $x \in X$, its stabilizer $B_x = \{g \in B| gx = x\} \subset B$ consists of only identity morphism.
\end{lem}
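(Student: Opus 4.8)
The plan is to study the \emph{relative stabilizer}
$$
Z \;=\; \{(g,x)\in B\times X : gx = x\},
$$
defined as the fibre product of the diagonal $X\hookrightarrow X\times X$ and the morphism $B\times X\to X\times X$, $(g,x)\mapsto (gx,x)$. It is a closed subgroup scheme of the trivial abelian scheme $B\times X$ over $X$, it contains the unit section $s\colon X\to Z$, $x\mapsto (1,x)$, and the fibre of the projection $q\colon Z\to X$ over a point $x$ is precisely the stabilizer $B_x$. The whole lemma follows once we show that the generic fibre $Z_\eta$ over the generic point $\eta=\Spec K$ of $X$ (with $K=k(X)$) is the trivial group scheme $\{1\}$: then every irreducible component of $Z$ dominating $X$ has a one-point generic fibre contained in $Z_\eta$, hence coincides with $\overline{s(\eta)}=s(X)$, while all remaining components of $Z$ lie over some proper closed subset $Y\subsetneq X$; taking $U=X\setminus Y$ we get $q^{-1}(x)\subseteq s(X)$, i.e. $B_x=\{1\}$, for every $x\in U$.

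To prove $Z_\eta=\{1\}$, I would first rule out a positive-dimensional identity component. Since we are in characteristic zero, the finite-type $K$-group scheme $Z_\eta$ is smooth (Cartier), so $(Z_\eta)^0$ is an abelian subvariety of $B_K$; as it is closed in the proper scheme $B_K$ it is indeed an abelian variety. Because $k$ is algebraically closed, $\operatorname{End}(B_K)=\operatorname{End}(B)$ by the rigidity of homomorphisms of abelian schemes over a connected base, so every abelian subvariety of $B_K$ is the base change of a unique abelian subvariety $B'\subseteq B$ defined over $k$. This $B'$ fixes the generic point of $X$, hence acts trivially on $X$ (its fixed locus is closed and dense, so everything), hence $B'=\{1\}$ by faithfulness. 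Therefore $(Z_\eta)^0=\{1\}$ and $Z_\eta$ is finite over $K$, and étale since $\operatorname{char} k=0$.

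It remains to see that the finite étale $K$-group scheme $Z_\eta$ has no nontrivial geometric point. Such a point corresponds to an element $g\in B(\bar K)$ of finite order with $g\xi=\xi$, where $\xi$ is the generic point of $X_{\bar K}$. Being a torsion point, $g$ lies in $B[n]$ for some $n$; but $B[n]$ is a disjoint union of copies of $\Spec k$ over the algebraically closed field $k$, so in fact $g\in B[n](k)\subseteq B$ is an honest automorphism of $X$. This automorphism fixes the generic point of $X$, hence induces the identity on $k(X)$, hence equals $\mathrm{id}_X$; by faithfulness $g=1$. Thus $Z_\eta(\bar K)=\{1\}$, and since $Z_\eta$ is reduced we conclude $Z_\eta=\{1\}$, which finishes the argument.

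The main obstacle is the first step, excluding a positive-dimensional generic stabilizer: this is where one genuinely uses that $B$ is an abelian variety — so its positive-dimensional subgroups contain abelian subvarieties — together with the descent of abelian subvarieties from $B_K$ to $B$ over the algebraically closed base and faithfulness. The remaining ingredients (smoothness of group schemes in characteristic zero, splitting of the $n$-torsion over an algebraically closed field, and the spreading-out at the end) are routine.
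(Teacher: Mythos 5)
Your argument is correct, but it takes a genuinely different route from the paper's. The paper quotes \cite[Chapter 2, Proposition 2.2.1]{bsu} for the finiteness of \emph{every} stabilizer $B_x$, uses upper semicontinuity of $\#B_x$ to find a dense stratum where the order is a constant $d$, notes that $B_x\subseteq B[d]$ leaves only finitely many candidate subgroups so that one fixed subgroup occurs on a dense subset, and then kills any nontrivial element of it by faithfulness, since it fixes a dense set of points. You instead assemble the stabilizers into the universal stabilizer group scheme $Z\to X$ and compute its generic fibre: the identity component is excluded by descending abelian subvarieties of $B_{k(X)}$ to $k$ (which in effect reproves, generically, the finiteness statement the paper cites), the finite \'etale part is excluded because torsion points of $B$ over any extension of the algebraically closed base field are already $k$-points, and a spreading-out step converts triviality of the generic stabilizer into triviality for general $x$. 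Both proofs run on the same engine --- an element stabilizing a dense set of points (resp.\ the generic point) is forced to be a $k$-rational point of $B$ acting trivially, hence the identity by faithfulness --- but yours is self-contained where the paper leans on a citation, at the price of more scheme-theoretic bookkeeping. One phrase should be tightened: ``fixes the generic point of $X$, hence induces the identity on $k(X)$'' is not a valid implication for a merely topological fixed point (consider $x\mapsto -x$ on $\A^1$); what you actually use is that a $\bar K$-point of $Z_\eta$ says $\alpha_g\circ\bar\xi=\bar\xi$ as morphisms $\Spec \bar K\to X$, and composing with the injection $k(X)\hookrightarrow \bar K$ forces $\alpha_g^{*}=\mathrm{id}$ on $k(X)$. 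This is a wording issue, not a gap.
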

\begin{proof}
We know from \cite[Chapter 2, Proposition 2.2.1]{bsu} that $B_x \subset B$ is a finite group scheme. (In characteristic zero, it is a finite group.) Since $\#B_x$ varies upper semi-continuously in $X$, there exists a Zariski open subset $U \subset X$ on which $\#B_x$ is a constant integer, say $d$.

Claim: $d = 1$.

Let us assume $d > 1$. Note that $B_x \subset B[d]$ for all $x \in U$, where $B[d] \cong (\Z/d\Z)^{2 \dim B}$ is the $d$-torsion subgroup of $B$. Hence there are only finitely many distinct groups $B_x$ for $x \in U$, say $B_1, \cdots, B_r$. The corresponding sets are denoted by $U_1, \cdots, U_r$. As $U = \bigcup U_i$ is dense in $X$, we see that at least one of the $U_i$'s is dense in $X$, say, $U_1$. By assumption, $d > 1$, and $B$ acts faithfully on $X$, we can take $\mathrm{id}_X \neq g \in B_1$, then $gx = x$ for all $x \in U_1$, therefore $g = \mathrm{id}_X$. A contradiction.
\end{proof}

\noindent \emph{Proof of Theorem \ref{rmb}.}
Statement (1) is a reformulation of results of Rosenlicht, Nishi, Matsumura and Brion. Here we only sketch the idea of proof. Fix an ample line bundle $L$ on $X$. The group action $\alpha \colon B \times X \to X$ defines a line bundle $\x{L} := \alpha^*L \otimes p_2^*L^{-1}$ on $B \times X$, rigidified along $1 \times X$, hence induces a morphism from $X$ to the Picard scheme $g \colon X \to \pic(B)$. One can show that $g$ is equivariant, and maps $X$ \emph{surjectively} onto a connected component say, $A^\prime$ of $\pic(B)$, thanks to the ampleness of $L$. The desired homogeneous fibration is obtained by taking Stein factorization of $g \colon X \to A^\prime$. See \cite[Chapter 2, Theorem 2.2.2]{bsu} and \cite[Proposition 1.2]{br2} for more details.

(2) and (3): Since $f$ is $B$-equivariant, we have $f(gx) = \pi(g)f(x) = \pi(g)$ for $g \in B, x \in F$, this shows $f \alpha = \pi p_1$. Moreover, $\alpha$ is a surjective and finite morphism, which implies that $\sigma$ is surjective onto an irreducible component $Z \subset B \times_AX$. We proceed to show $Z = B \times_AX$, and $\sigma$ is an isomorphism.

Let $y \in X$ be a general point, and consider the equation
$$
bx = y, \hspace{2mm} b \in B, x \in F. \eqno{(1)}
$$
It has at least one solution $(b_0, x_0) \in B \times F$. For any $g \in G$, clearly we have $b_0g^{-1}gx_0 = y$. Conversely, if $bx = y$, then $b^{-1}b_0x_0 = x$, so that $g = b^{-1}b_0 \in G$, and $b = b_0g^{-1}$. Therefore $\{(b_0g^{-1}, gx_0)\}_{g\in G} \subset B \times F$ constitute \emph{all distinct solutions} of (1) by Lemma \ref{stab}. Their images in $B \times_A X$ lie in $Z$. This is true for a general $y \in X$, it follows that $Z = B \times_A X$ and $\deg(\alpha) = \#G = \deg(\pi_B)$. Therefore $\sigma$ is a proper birational and finite morphism between normal varieties, whence an isomorphism, and $X \cong B \times^G F$. \qed ~\\

The connected automorphism group of a nonsingular projective variety with nonnegative Kodaira dimension is an abelian variety. We will use the following extension of this simple fact:

\begin{lem} \label{abelian}
Let $(X, \Delta)$ be a proper log variety, and assume $K_X + \Delta$ is pseudo-effective. Let $G$ be a connected algebraic group, acting faithfully on $X$ such that $g(\supp(\Delta)) = \supp(\Delta)$ for every $g \in G$. Then we have

(1) $G$ is a semi-abelian variety if $(X, \Delta)$ has only log canonical singularities.

(2) $G$ is an abelian variety if $(X, \Delta)$ has only Klt singularities.
\end{lem}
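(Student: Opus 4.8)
The plan is to reduce, via Chevalley's structure theorem, to ruling out faithful actions of $\ga$ and $\gm$. Write $G$ in an exact sequence $1 \to L \to G \to A \to 1$ with $L$ the largest connected normal affine subgroup and $A$ an abelian variety; then $G$ is semi-abelian precisely when $L$ is a torus, and abelian precisely when $L$ is trivial. By the structure theory of connected affine algebraic groups over $\C$, if $L$ is not a torus it contains a copy of $\ga$, and if $L \neq 1$ it contains a copy of $\ga$ or of $\gm$; any such subgroup of $G$ acts faithfully on $X$ and, being connected, preserves every irreducible component of $\supp\Delta$. So it suffices to rule out (i) a faithful $\ga$-action on $X$ preserving $\supp\Delta$ when $(X,\Delta)$ is lc, and (ii) a faithful $\gm$-action on $X$ preserving $\supp\Delta$ when $(X,\Delta)$ is klt.

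Assume $H \in \{\ga,\gm\}$ acts as in (i) or (ii). I would then pass to a birational model adapted to $H$. Since $H$ is connected, $H$-equivariant resolution of singularities together with an $H$-equivariant resolution of the rational quotient $X \dashrightarrow Y$ by $H$ (here $Y$ is a projective model of $k(X)^H$, on which $H$ acts trivially) yields an $H$-equivariant birational morphism $\mu \colon X' \to X$ with $X'$ smooth projective, and a morphism $p \colon X' \to Y'$ onto a smooth projective $Y'$ which is a $\BP^1$-bundle over a dense open $Y'_0 \subseteq Y'$, with $H$ acting fibrewise and trivially on $Y'$. Put
$$
\Delta' := \mu_*^{-1}\Delta + {\textstyle\sum}_{a(E_i;X,\Delta)<0}\bigl(-a(E_i;X,\Delta)\bigr)E_i,
$$
the sum over $\mu$-exceptional prime divisors of negative discrepancy. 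The discrepancy formula then reads $K_{X'}+\Delta' = \mu^*(K_X+\Delta) + (\text{effective divisor})$, so $K_{X'}+\Delta'$ is pseudo-effective; the lc (resp. klt) hypothesis forces every coefficient of $\Delta'$ to be $\le 1$ (resp. $<1$), and $\supp\Delta'$ is $H$-invariant.

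Next I would restrict to a general fibre $F \cong \BP^1$ of $p$. The point is that the only reduced irreducible $H$-invariant horizontal divisors in $p^{-1}(Y'_0)$ are the fixed-point sections of $p$ — exactly one for $H = \ga$ (which has no nontrivial character), at most two for $H = \gm$ — because over each fibre an $H$-semi-invariant section of $\oo_{\BP^1}(d)$ cutting out a prime divisor is a coordinate monomial. Since the vertical components of $\Delta'$ do not meet $F$, this gives $\Delta' \cdot F \le 1$ in case (i) (using that each coefficient is $\le 1$) and $\Delta' \cdot F < 2$ in case (ii) (using the klt bound $< 1$ on the coefficients of the two sections). As $K_{X'} \cdot F = \deg K_{\BP^1} = -2$, we conclude $(K_{X'}+\Delta') \cdot F < 0$. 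But the fibres of $p$ over $Y'_0$ cover a dense open subset of $X'$, so $[F]$ is a movable curve class, and a pseudo-effective divisor is nonnegative on movable curve classes — a contradiction. Hence $L$ is a torus in case (1) and trivial in case (2), as required.

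I expect the genuinely delicate step to be the construction of the model $p \colon X' \to Y'$: one has to perform resolution of singularities, resolution of the rational quotient, and the reduction to an honest relative $\BP^1$ simultaneously and $H$-equivariantly, while keeping track of $\supp\Delta$. In particular one needs to check that the general fibre of the equivariant resolution is really $\BP^1$ — which follows from generic smoothness in characteristic zero together with the fact that it carries a dense $H$-orbit — and that the fixed-point sections of $p$ are defined over a dense open of $Y'$. Chevalley's theorem, the group-theoretic reduction, and the numerical computation on $F$ are all standard.
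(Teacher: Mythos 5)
Your argument is correct in outline and, unlike the paper, is essentially self-contained: the paper's own proof of this lemma consists of observing that (1) reduces to excluding a $\ga$-subgroup and then citing \cite[Proposition 4.6]{a1} and \cite[Theorem 7]{i}, with the remark that pseudo-effectivity of $K_X+\Delta$ suffices. What you supply is precisely the argument behind those references. The Chevalley reduction to excluding a faithful $\ga$-action (lc case), respectively a faithful $\ga$- or $\gm$-action (klt case), is standard and correctly handled, and the exclusion itself — restrict the boundary $\Delta'$ on an equivariant resolution to a general fibre $F\cong\BP^1$ of the rational quotient, note that $H$-invariant horizontal components of $\Delta'$ can meet $F$ only in $H$-fixed points so that $\Delta'\cdot F\le 1$ for $\ga$ with lc coefficients and $\Delta'\cdot F<2$ for $\gm$ with klt coefficients, and contradict pseudo-effectivity via $(K_{X'}+\Delta')\cdot F<0$ against the movable class $[F]$ — is sound; this is exactly where the coefficient bounds $\le 1$ versus $<1$ bite, which your write-up makes explicit. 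Two minor points to tighten: for $\gm$ the two fixed points on the general fibre are intrinsically distinguished (source and sink of the flow), so the fixed locus over a dense open of $Y'$ really consists of two rational sections rather than a possible irreducible bisection, though the bound $\Delta'\cdot F<2$ holds either way; and since $X$ is only assumed proper, the equivariant resolution $X'$ need only be proper, so the inequality ``pseudo-effective dotted with a covering family of curves is $\ge 0$'' should be justified directly from the definition of pseudo-effectivity as a limit of effective classes (a general member of the family is not contained in the support of a given effective divisor), rather than by appeal to projective machinery. Neither point affects the validity of the proof; your route has the advantage of making the role of the singularity hypotheses completely transparent, at the cost of the equivariant-resolution bookkeeping that the paper avoids by citation.
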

\begin{proof}
The lemma is essentially proved in \cite[Theorem 7]{i} and \cite[Proposition 4.6]{a1}. We reproduce a proof here for the reader's convenience.

To show a connected algebraic group $G$ is a semi-abelian variety (resp. an abelian variety), it suffices to show that $G$ contains no one-dimensional linear group $\ga$ (resp. $\ga$ and $\gm$). In view of this, we may assume that $G$ is either $\ga$ or $\gm$. Note that $\ga$ (resp. $\gm$) can be naturally identified with an open subset of $\PP^1$.

Assume that $(X, \Delta)$ is log canonical. The closed subset $\mathrm{Sing}(X) \cup \supp(\Delta)$ is $G$-invariant. By Rosenlicht's cross section theorem \cite[Theorem 10]{r1}, there exists a $G$-invariant open subset $U \subset X \setminus (\mathrm{Sing}(X) \cup \supp(\Delta))$ and a $G$-equivariant isomorphism $U ~\tilde{\rightarrow}~ G \times V$, where $G$ acts on $G \times V$ on the first factor by translation. Let $V \subset Y$ be a completion of $V$ such that $Y$ is nonsingular and $Y \setminus V$ is a simple normal crossing divisor. Together with the natural inclusion $G \subset \PP^1$, we obtain a birational map $X \dashrightarrow \PP^1 \times Y$.

By Hironaka's resolution of singularities, there exists a proper nonsingular variety $W$ and a commutative diagram
$$
\xymatrix{
& W \ar[dr]^g\ar[dl]_f&\\
X \ar@{-->}[rr] & & \PP^1 \times Y
}
$$
such that $f$ is an isomorphism over $U$, $g$ is an isomorphism over $G \times V$. In particular, we have $g(f^{-1}(X \setminus U)) = \PP^1 \times Y \setminus G \times V$.

Write
$$
	K_W + \Delta_W^+ = f^*(K_X + \Delta) + \Delta_W^-,
$$
where $\Delta_W^+$ and $\Delta_W^-$ are effective $\Q$-divisors without common components. As $\Delta$ is effective, the divisor $\Delta_W^-$ is effective, so that $K_W + \Delta_W^+$ is pseudo-effective because by assumption $K_X + \Delta$ is pseudo-effective. Hence the divisor $K_{\PP^1 \times Y} + \Delta'$ is pseudo-effective as well, here $\Delta' := g_*(\Delta_W^+)$ is a boundary divisor supported on $\PP^1 \times Y \setminus G \times V$.

Let $F \simeq \PP^1$ be a general fibre of the projection $\PP^1 \times Y \to Y$. Then
$$
	K_{\PP^1} + \Delta_{\PP^1} = (K_{\PP^1 \times Y} + \Delta')|_{F}
$$
is pseudo-effective. In particular, $\deg \Delta_{\PP^1} \geq 2$. However, $\Delta_{\PP^1}$ is a boundary divisor supported on $\PP^1 \setminus G$. Thus $G$ cannot be the additive group $\ga$. Moreover, if $(X, \Delta)$ is Klt, then $G$ cannot be $\ga$ or the multiplicative group $\gm$. This completes the proof.
\end{proof}

\section{Proof of the main result}
On a nonsingular projective variety with trivial canonical bundle, the natural pairing between global holomorphic one-forms and vector fields is non-degenerate. We generalize this to log varieties with torsion log canonical divisor, by discovering an equality of dimensions of one-forms and vector fields that tangent to the boundary.

The following important result of Greb, Kebekus and Peternell \cite[Proposition 6.9]{gkp} will be used in proving Lemma \ref{am}. It is an extension of Hodge duality for nonsingular varieties to mildly singular spaces. Here we state it in a slightly different way.

\begin{prop} \label{hd}
	(Hodge duality for Klt spaces). Let $Z$ be a $d$-dimensional normal projective variety. Suppose that there exists an effective $\Q$-divisor $D$ on $Z$ such that $(Z, D)$ is Klt. Then for each integer $0 \leq p \leq d$, there is an anti-linear isomorphism $H^0(Z, \Omega_X^{[p]}) ~\tilde{\rightarrow}~ H^{p}(Z, \oo_X)$.
\end{prop}

\begin{lem} \label{am}
Let $Y$ be a nonsingular projective variety of dimension $d$, and let $T$ be a $\Q$-divisor such that $T \sim_\Q 0$ and $\{T\}$ has simple normal crossing support. Then we have a natural anti-$\C$-linear isomorphism of complex vector spaces
$$
H^0(Y, T_Y(-\log \lceil\{T\}\rceil)( K_Y + \lceil T \rceil)) \cong H^1(Y, \oo_Y( K_Y + \lceil T \rceil))\dual.
$$
\end{lem}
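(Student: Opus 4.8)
The plan is to recognize this as a Hodge-theoretic duality statement and prove it by combining Serre duality with a vanishing/degeneration argument, à la Kawamata–Viehweg. Write $D = K_Y + \lceil T \rceil$ for brevity. The right-hand side is $H^1(Y, \oo_Y(D))\dual$; by Serre duality this is isomorphic (naturally, up to the usual conjugation issues) to $H^{n-1}(Y, \Omega^n_Y \otimes \oo_Y(-D)) = H^{n-1}(Y, \oo_Y(-\lceil T\rceil))$ where $n = \dim Y$. Meanwhile the left-hand side is $H^0$ of a twisted logarithmic tangent sheaf. The point is to convert the logarithmic tangent sheaf into a logarithmic differential form of top degree via the wedge pairing: on the log-smooth locus one has $T_Y(-\log E) \cong \Omega^{n-1}_Y(\log E) \otimes (\omega_Y(E))^{-1}$ for a reduced SNC divisor $E$, where $\omega_Y(E) = \oo_Y(K_Y + E)$. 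Taking $E = \lceil \{T\} \rceil$ and twisting by $\oo_Y(K_Y + \lceil T \rceil)$, the left-hand sheaf becomes
$$
\Omega^{n-1}_Y(\log \lceil\{T\}\rceil) \otimes \oo_Y(K_Y + \lceil T \rceil - K_Y - \lceil\{T\}\rceil) = \Omega^{n-1}_Y(\log \lceil\{T\}\rceil) \otimes \oo_Y(\lfloor T \rfloor + \{(-T)\}'),
$$
and I will need to bookkeep the fractional/integral parts carefully: since $T \sim_\Q 0$, the divisor $\lceil T \rceil - \lceil \{T\}\rceil = \lfloor T \rfloor$ is $\Q$-linearly equivalent to $-\{T\}$, so the twist is by (a divisor $\Q$-equivalent to) the negative of a fractional SNC divisor. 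Thus the LHS becomes $H^0(Y, \Omega^{n-1}_Y(\log E) \otimes \oo_Y(-\lfloor T \rfloor)^{\vee}$-type object — more precisely I should match it against $H^{n-1}(Y, \oo_Y(-\lceil T \rceil))$ using the logarithmic analogue of Hodge symmetry.

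**The key technical input** is a logarithmic Hodge-theoretic symmetry with a $\Q$-divisor twist — exactly the kind of statement appearing in Esnault–Viehweg and used by Ambro. Concretely, for a fractional SNC divisor $\Delta' = \{T\}$ (or $\{-T\}$) with $T \sim_\Q 0$, one has an isomorphism
$$
H^q(Y, \Omega^p_Y(\log \lceil \Delta' \rceil) \otimes \oo_Y(-\lceil \Delta' \rceil + \text{(appropriate integral part)})) \;\cong\; H^p(Y, \Omega^q_Y(\log \lceil \Delta' \rceil) \otimes \oo_Y(\cdots))\dual
$$
coming from the fact that the $\Q$-linear triviality of $T$ produces a unitary local system (a torsion/unitary rank-one representation of $\pi_1$) whose associated Higgs bundle / logarithmic de Rham complex is polarized, so that the corresponding $E_1$-degeneration and Hodge symmetry hold. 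The cleanest route is: the line bundle $\oo_Y(\lceil T \rceil)$ together with the $\Q$-divisor structure defines a logarithmic connection with unitary monodromy along $\lceil\{T\}\rceil$; Deligne's theory (or the Esnault–Viehweg cyclic-cover trick, taking the index-$m$ cover that trivializes $mT \sim 0$) gives Hodge decomposition and $p\leftrightarrow q$, $\mathcal{V}\leftrightarrow\mathcal{V}\dual$ symmetry on the log-de Rham cohomology with coefficients in this bundle. Specializing to $(p,q) = (n-1, 0)$ versus $(0, n-1)$ and matching twists gives precisely the claimed isomorphism after the $T_Y(-\log E) \leftrightarrow \Omega^{n-1}_Y(\log E)$ translation above.

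**The main obstacle** will be the careful matching of the integral and fractional parts of $T$, $-T$, $\lceil T \rceil$, $\lceil \{T\}\rceil$, and the round-downs on both sides of the duality, so that the twisting divisor on one side is exactly the Serre-dual/Hodge-symmetric partner of the other — a computation that is routine in principle but error-prone, and where the hypothesis $T \sim_\Q 0$ (rather than merely $T \equiv 0$ or $\kappa(T) = 0$) is used crucially to pass from the fractional divisor $\{T\}$ to its ``complement'' $\{-T\}$ and to get an honest (unitary) representation rather than just a numerically trivial one. A secondary point requiring care is the naturality and the anti-$\C$-linearity: the isomorphism is not $\C$-linear but conjugate-linear, which is expected since it ultimately comes from a Hermitian pairing (complex conjugation on harmonic forms with values in a unitary bundle); I would phrase the final identification via the Hodge star / the polarization pairing on the unitary-local-system cohomology and simply record that conjugation enters there, rather than trying to remove it. I expect the write-up to proceed: (i) reduce to the log-smooth wedge isomorphism $T_Y(-\log E)(K_Y+E) \cong \Omega^{n-1}_Y(\log E)$ and its twisted form; (ii) invoke the cyclic cover associated to $mT \sim 0$ and descend Hodge symmetry to the eigenspace of the covering group; (iii) assemble the twists and conclude.
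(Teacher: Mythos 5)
Your proposal is correct and follows essentially the same route as the paper: the paper also takes the cyclic cover trivializing $NT\sim 0$, applies Hodge duality (on the resulting quotient-singular cover) together with Serre duality, descends to the eigensheaves $\Omega_Y^p(\log\lceil\{iT\}\rceil)(\lfloor iT\rfloor)$ of the covering group, and finishes with the wedge identification $T_Y(-\log\lceil\{T\}\rceil)(K_Y+\lceil T\rceil)\cong\Omega_Y^{d-1}(\log\lceil\{T\}\rceil)\otimes\oo_Y(\lfloor T\rfloor)$. The bookkeeping of fractional parts and the conjugation of eigenvalues that you flag as the main obstacle is exactly the step the paper records via the $\zeta\mapsto\zeta^{-1}$ matching of eigenspaces under the anti-linear map.
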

\begin{proof}
Let $N > 0$ be an integer such that $NT$ is integral and linearly equivalent to zero, so $NT = \mathrm{div}(\varphi)$ for some rational function $\varphi \in k(Y)$. Let $\pi \colon Z \to Y$ be the normalization of $Y$ in $k(Y)(\!\!\sqrt[N]{\varphi})$. Then $Z$ has quotient singularities \cite[Lemma 3.24]{ev}, hence it is Cohen-Macaulay and its dualizing sheaf is $\Omega_Z^{[d]}$.

By Proposition \ref{hd} and Serre duality \cite[Corollary 7.7]{h}, we have an anti-$\C$-linear isomorphism
$$
H^0(Z, \Omega_Z^{[d-1]}) \xrightarrow{\mbox{\tiny{HD}}} H^{d-1}(Z, \oo_Z) \xrightarrow{\mbox{\tiny{SD}}} H^1(Z, \Omega_Z^{[d]})\dual \eqno{(2)}
$$
and the composition map preserves the action of the Galois group of $\pi$. Moreover, push-forward of $\Omega^{[p]}_Z$ decomposes into the sum of eigensheaves \cite[Lemma 4.6]{a2}
$$
\pi_*\Omega^{[p]}_Z = \bigoplus_{i=0}^{N-1} \Omega_Y^p(\log \lceil \{iT\} \rceil)(\rddown{iT}), \hspace{3mm} 0 \leq p \leq d.
$$
Hence the isomorphism (2) induces an anti-$\C$-linear isomorphism from the eigen-subspace of $H^0(Y, \pi_*\Omega_Z^{[d-1]}) = H^0(Z, \Omega_Z^{[d-1]})$ associated to $\zeta$, to the eigen-subspace of $H^1(Y, \pi_*\Omega_Z^{[d-1]}) = H^1(Z, \Omega_Z^{[d]})$ associated to $\ovline\zeta = \zeta^{-1}$:
$$
H^0(Y, \Omega_Y^{d-1}(\log \lceil\{T\}\rceil)(\lfloor T \rfloor)) ~\tilde{\rightarrow}~ H^1(Y, \Omega_Y^d(\log \lceil\{T\}\rceil)(\lfloor T \rfloor))\dual. \eqno{(3)}
$$
Observe that the natural pairing of locally free sheaves
$$
	\Omega_Y^p(\log \lceil\{T\}\rceil) \otimes \Omega_Y^{d-p}(\log \lceil\{T\}\rceil) \to \Omega_Y^d(\log \lceil\{T\}\rceil) = \oo_Y(K_Y + \lceil\{T\}\rceil)
$$
is non-degenerate, from which we deduce that
$$
	\Omega_Y^{d-1}(\log \lceil\{T\}\rceil)(\lfloor T \rfloor) \cong T_Y(-\log \lceil\{T\}\rceil)(K_Y + \lceil \{T\} \rceil + \lfloor T \rfloor) = T_Y(-\log \lceil\{T\}\rceil)(K_Y + \lceil T \rceil)
$$
and
$$
	\Omega_Y^d(\log \lceil\{T\}\rceil)(\lfloor T \rfloor) = \oo_Y(K_Y + \lceil \{T\} \rceil + \lfloor T \rfloor) = \oo_Y(K_Y + \lceil T\rceil.
$$
The lemma then follows from (3).
\end{proof}

\begin{prop} \label{vector}
Let $(X, \Delta)$ be a projective log variety with only Klt singularities. Assume that $K_X + \Delta \sim_\Q 0$, then we have a natural anti-$\C$-linear isomorphism
$$
H^0(X, T_X(-\log\lceil\Delta\rceil)) \cong H^1(X, \oo_X)\dual.
$$
In particular, $h^0(X, T_X(-\log\lceil\Delta\rceil)) = h^1(X, \oo_X) = h^0(X, \Omega_X^{[1]})$.
\end{prop}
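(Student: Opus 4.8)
The plan is to pull the problem back to a log resolution, apply Lemma~\ref{am} there, and descend both sides to $X$. Since $(X,\Delta)$ is klt, $\lceil\Delta\rceil$ is a reduced divisor, and by Lemma~\ref{lie} the space in question is $\lie(\aut(X,\lceil\Delta\rceil))=\lie(G)$ with $G:=\aut(X,\lceil\Delta\rceil)^{\circ}$. I would fix a log resolution $\mu\colon Y\to X$ of the pair $(X,\lceil\Delta\rceil)$ that is equivariant for $G$ (such a resolution exists, being functorial for smooth morphisms, and is then an isomorphism over the big open set $U\subseteq X$ on which $(X,\lceil\Delta\rceil)$ is log smooth), and set $B:=\mu^{-1}_{*}\lceil\Delta\rceil+\mathrm{Exc}(\mu)$, a reduced simple normal crossing divisor preserved by $G$. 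Let $A:=K_Y-\mu^{*}(K_X+\Delta)=-\mu^{-1}_{*}\Delta+\sum_{E}a(E;X,\Delta)\,E$ be the discrepancy divisor and put $\Theta:=A-K_Y=-\mu^{*}(K_X+\Delta)$. Then $\Theta$ is a genuine $\Q$-divisor with $\Theta\sim_{\Q}0$ (obtained by pulling back $K_X+\Delta\sim_{\Q}0$); since $K_Y$ is integral, $\{\Theta\}=\{A\}$, which is supported on $B$ and hence has simple normal crossing support. Lemma~\ref{am} therefore applies to $(Y,\Theta)$ and gives an anti-$\C$-linear isomorphism
$$
H^0\!\bigl(Y,\,T_Y(-\log\lceil\{\Theta\}\rceil)(K_Y+\lceil\Theta\rceil)\bigr)\;\cong\;H^1\!\bigl(Y,\,\oo_Y(K_Y+\lceil\Theta\rceil)\bigr)\dual .
$$

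The next step is to identify the twisting divisor and descend the right-hand side. Since $K_Y$ is integral, $K_Y+\lceil\Theta\rceil=\lceil K_Y+\Theta\rceil=\lceil A\rceil$; as $(X,\Delta)$ is klt, all coefficients of $A$ exceed $-1$ and those of its non-exceptional part $-\mu^{-1}_{*}\Delta$ lie in $(-1,0)$, so $\lceil A\rceil=\sum_{E}\lceil a(E;X,\Delta)\rceil\,E$ is an effective $\mu$-exceptional divisor. Hence $\mu_{*}\oo_Y(\lceil A\rceil)=\oo_X$, and writing $\lceil A\rceil=K_Y+(\lceil\Theta\rceil-\Theta)+\Theta$ with $\lceil\Theta\rceil-\Theta$ a simple normal crossing $\Q$-divisor with coefficients in $[0,1)$ and $\Theta\sim_{\Q}0$ nef and big over $X$, relative Kawamata--Viehweg vanishing yields $R^{i}\mu_{*}\oo_Y(\lceil A\rceil)=0$ for $i>0$. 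By the Leray spectral sequence $H^1(Y,\oo_Y(\lceil A\rceil))\cong H^1(X,\oo_X)$, so the right-hand side of the displayed isomorphism equals $H^1(X,\oo_X)\dual$.

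The main obstacle is the remaining identification of the left-hand side with $H^0(X,T_X(-\log\lceil\Delta\rceil))$, where the boundary-and-twist data on $Y$ must be matched against the intrinsic reflexive sheaf on $X$. One inclusion is formal: the torsion-free sheaf $\mu_{*}\bigl(T_Y(-\log\lceil\{\Theta\}\rceil)(\lceil A\rceil)\bigr)$ agrees with $T_X(-\log\lceil\Delta\rceil)$ over $U$ (there $\lceil A\rceil=0$ and $\lceil\{\Theta\}\rceil$ restricts to $\lceil\Delta\rceil$), hence embeds into the reflexive sheaf $T_X(-\log\lceil\Delta\rceil)$, giving an inclusion on global sections. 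For the reverse inclusion I would use the $G$-equivariance of $\mu$: every $v\in H^0(X,T_X(-\log\lceil\Delta\rceil))=\lie(G)$ lifts, via the $G$-action on $Y$, to an element of $\lie(\aut(Y,B))=H^0(Y,T_Y(-\log B))$, and since $\lceil\{\Theta\}\rceil\subseteq B$ and $\lceil A\rceil\geq0$ the inclusions $T_Y(-\log B)\subseteq T_Y(-\log\lceil\{\Theta\}\rceil)\subseteq T_Y(-\log\lceil\{\Theta\}\rceil)(\lceil A\rceil)$ place this lift in the left-hand space; pushing it back down returns $v$, and as $\mu_{*}$ is injective on global sections the two spaces coincide.

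Combining the three steps yields the anti-$\C$-linear isomorphism $H^0(X,T_X(-\log\lceil\Delta\rceil))\cong H^1(X,\oo_X)\dual$. For the last assertion, an anti-linear isomorphism preserves dimension, so $h^0(X,T_X(-\log\lceil\Delta\rceil))=h^1(X,\oo_X)$, while $h^1(X,\oo_X)=h^0(X,\Omega_X^{[1]})$ by Hodge symmetry for klt varieties, as already invoked in the proof of Lemma~\ref{am}.
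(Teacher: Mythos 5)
Your proposal is correct and follows essentially the same route as the paper: an equivariant log resolution, an application of Lemma~\ref{am} to $\Theta=-\mu^*(K_X+\Delta)$ (the paper's $T$, with your $A$ equal to the paper's $-D$), relative Kawamata--Viehweg vanishing to descend the $H^1$ side, and equivariant lifting of vector fields to descend the $H^0$ side. The only difference is that you spell out the two inclusions identifying $H^0(Y,T_Y(-\log\lceil\{\Theta\}\rceil)(\lceil A\rceil))$ with $H^0(X,T_X(-\log\lceil\Delta\rceil))$, where the paper delegates this to \cite[Lemma 1.1]{a1}.
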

\begin{rem}
This result fails if $(X, \Delta)$ is not Klt. See Example \ref{keyexam}.
\end{rem}
\begin{proof}
Let $\mu \colon Y \to X$ be an equivariant resolution of $X$ with respect to $\mathrm{Sing}(X) \cup \supp (\Delta)$ (cf. \cite[Section 1.1]{a1}). Let $\mu^*(K_X + \Delta) = K_Y + D$. The $\Q$-divisor $T = - D - K_Y \sim_\Q 0$, and $\{T\}$ has simple normal crossing support. Lemma \ref{am} gives an anti-$\C$-linear isomorphism
$$
H^0(Y, T_Y(-\log \lceil\{ D \}\rceil)(\lceil -D \rceil)) \cong H^1(Y, \oo_Y(\lceil -D \rceil))\dual.
$$
By Kawamata-Viehweg vanishing, $\oo_Y(\lceil -D \rceil)$ is $\mu_*$-acyclic and therefore the right-hand side is isomorphic to $H^1(X, \oo_X)\dual$ since $(X, \Delta)$ is Klt and $\lceil -D \rceil \geq 0$ is exceptional over $X$. By \cite[Lemma 1.1]{a1}, the left-hand side equals $H^0(X, T_X(-\log \lceil \Delta \rceil))$, since $\mu$ is equivariant, and every vector field in $T_X(-\log \lceil \Delta \rceil)$ lifts to a vector field in $T_Y(-\log \lceil\{ D \}\rceil)$.
\end{proof}

Let $X$ be a normal projective variety, recall that its \emph{Albanese map} is by definition, the rational map $f = f_Y\mu^{-1} \colon X\dashrightarrow A$
$$
\xymatrix{
& Y \ar[dl]_\mu\ar[dr]^{f_Y} &\\
X \ar@{-->}[rr]^f& & A
}
$$
where $\mu \colon Y \to X$ is a resolution of singularities and $f_Y \colon Y \to A$ the Albanese morphism of $Y$. The rational map $f \colon X \dashrightarrow A$ does not depend on the choice of the resolution, and Kawamata showed in \cite{ka} that $f$ is everywhere defined if $X$ has rational singularities (e.g., $X$ has Klt or dlt singularities \cite[Chapter 5, Theorem 5.22]{km}). In that case we call it the \emph{Albanese morphism} of $X$. It is easy to see that $f \colon X \dashrightarrow A$ satisfies the universal mapping property into abelian varieties, i.e., any rational map from $X$ into an abelian variety factors uniquely through $f$.\\

Now we are in position to prove the main result.

\begin{thm}\label{main}
Let $(X, \Delta)$ be a projective log variety with only Klt singularities.  Assume the log canonical divisor $K_X + \Delta \sim_\Q 0$. Then

(1) The connected component of the group scheme $\aut(X, \lceil\Delta\rceil)$ containing the identity morphism, $B := \aut^0(X, \lceil\Delta\rceil)$, is an abelian variety of dimension $q = h^1(X, \oo_X)$.

(2) The Albanese morphism $f \colon X \to A$ is isomorphic to the homogeneous fibration induced by the action of $B$ on $X$, thus $A = B/G$ for some finite subgroup $G \subset B$.

(3) $B \times_A X \cong B \times F$, where $F$ is the fibre of $f$ over $1 \in A$. Moreover, $G$ acts on $F$ and $X \cong B \times^G F$.
\end{thm}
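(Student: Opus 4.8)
The plan is to obtain all three parts from the tools already assembled. For part (1), put $B := \aut^0(X, \lceil\Delta\rceil)$. Over $\C$ this is a smooth connected algebraic group; it acts faithfully on $X$, and every element preserves $\supp\lceil\Delta\rceil = \supp\Delta$ as a reduced subscheme. Since $K_X + \Delta \sim_\Q 0$ is pseudo-effective and $(X,\Delta)$ is Klt, Lemma \ref{abelian}(2) applied to the pair $(X,\Delta)$ with this group gives that $B$ is an abelian variety — note its hypotheses only constrain $B$ through $\supp\Delta = \supp\lceil\Delta\rceil$, which $B$ visibly preserves. For the dimension, I would combine Lemma \ref{lie}, applied to the \emph{reduced} divisor $\lceil\Delta\rceil$, which gives $\lie(\aut(X,\lceil\Delta\rceil)) \cong H^0(X, T_X(-\log\lceil\Delta\rceil))$, with Proposition \ref{vector}, which identifies this space with $H^1(X,\oo_X)\dual$; in characteristic zero $\dim B = \dim\lie B$, so $\dim B = h^0(X, T_X(-\log\lceil\Delta\rceil)) = h^1(X,\oo_X) = q$.

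For parts (2) and (3), I would feed the action $B \times X \to X$ of the abelian variety $B$ on the normal projective variety $X$ into Theorem \ref{rmb}. This yields a homogeneous fibration $f' \colon X \to A$ with $A = B/G$ for a finite subgroup $G$, together with the isomorphisms $B \times_A X \cong B \times F$ and $X \cong B \times^G F$, where $F = f'^{-1}(1)$. Everything then reduces to identifying $f'$ with the Albanese morphism $f \colon X \to \alb(X)$, which is a genuine morphism because Klt singularities are rational, and satisfies $\dim \alb(X) = h^1(X,\oo_X) = q$. By the universal property of the Albanese, $f' = h \circ f$ for a morphism $h \colon \alb(X) \to A$; after normalizing base points we may take $h$ to be a homomorphism, and since $f'$ is surjective so is $h$, hence $h$ is an isogeny between abelian varieties both of dimension $q$.

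To upgrade $h$ to an isomorphism I would show that $f$ is also constant on the fibres of $f'$: each fibre of $f'$ is connected because $f'_*\oo_X = \oo_A$, while its image under $f$ lies in a fibre of the isogeny $h$, which is finite — and a connected subset of a finite set is a point. Hence $f$ factors as $f = h'\circ f'$ for some morphism $h'\colon A \to \alb(X)$, using the standard fact that a morphism constant on the fibres of a proper surjection $f'$ with $f'_*\oo_X = \oo_A$ descends through it. Then $f = h'\circ h\circ f$ forces $h'\circ h = \mathrm{id}$ by the uniqueness clause of the universal property, while $f' = h\circ h'\circ f'$ together with surjectivity of $f'$ forces $h\circ h' = \mathrm{id}$; so $h$ is an isomorphism, $\alb(X) \cong A = B/G$, and $f$ is isomorphic to $f'$ as fibrations. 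Parts (2) and (3) are then precisely the output of Theorem \ref{rmb}.

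The step I expect to be the main obstacle is this last identification — in particular the claim that $f$ descends through $f'$, which genuinely uses the connectedness of the fibres of the homogeneous fibration together with the finiteness of the comparison isogeny; the remainder is bookkeeping with base points and the universal property. A lesser point requiring care is the passage from $\lie B$ to $\dim B$ (legitimate only because group schemes are smooth in characteristic zero) and making sure Lemma \ref{abelian} is invoked with the pair $(X,\Delta)$ while the acting group is the one preserving $\lceil\Delta\rceil$.
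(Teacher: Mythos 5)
Your proposal is correct and follows essentially the same route as the paper: part (1) from Lemma \ref{abelian}, Lemma \ref{lie} and Proposition \ref{vector}, and parts (2)--(3) by running the $B$-action through Theorem \ref{rmb} and identifying the resulting homogeneous fibration with the Albanese morphism via the universal property, normalization of base points, and connectedness of the fibres. Your descent-and-two-sided-inverse argument for upgrading the comparison isogeny to an isomorphism is just a slightly more explicit version of the paper's observation that a surjection with connected fibres cannot factor through a nontrivial isogeny.
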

\begin{proof}
$B$ is a connected algebraic group, acts faithfully on $X$. Statement (1) follows from Lemma \ref{abelian}, Lemma \ref{lie} and Theorem \ref{vector}.

(2) Having in hand a faithful action on $X$ by the abelian variety $B$, Theorem \ref{rmb} produces a homogeneous fibration $g \colon X \to A^\prime$, where $A^\prime$ is an abelian variety of dimension $q$. This fibration factors through the Albanese morphism as
$$
\xymatrix{
& A \ar[dr]^\pi &\\
X \ar[ur]^f\ar[rr]^g & & A^\prime
}
$$
and we may assume $\pi$ preserve the identity elements, so that $\pi$ an isogeny of abelian varieties. Now note that $g$ is surjective and has connected fibres, it follows that $\pi$ is an isomorphism.

(3) This follows from Theorem \ref{rmb} (3).
\end{proof}

\section{Some examples}
Theorem \ref{main} enables us to classify certain log Calabi-Yau varieties with large irregularity. We illustrate the simplest case as an example.
\begin{exam} \label{klt}
 \emph{Classification of $d$-dimensional Klt log varieties $(X, \Delta)$ with $K_X + \Delta \sim_\Q 0$ and irregularity $q = h^1(X, \oo_X) = d - 1$}.

By Theorem \ref{main}, $X \cong B \times^G F$ is an \'etale quotient of $B \times F$, where $B$ is an abelian variety of dimension $q = d - 1$ and $F$ is a nonsingular curve. In particular, $X$ is nonsingular. The \emph{abelian} group $G \subset B$ acts faithfully on $F$ and thus can be considered as a subgroup of $\aut(F)$. We distinguish it into two cases.~\\

\noindent \emph{Case I.} $\Delta = 0$.

$F$ is an elliptic curve. Therefore the abelian group $G$ is a direct sum $H \oplus T$, where $H$ is a group of automorphisms of $F$ that preserves the group structure and $T$ is a group of translations in $F$. Note that by assumption the irregularity is $q = d - 1$, the group $H$ is nontrivial as long as $G$ is nontrivial. The classification of $H$ is well known (cf. \cite[Chapter VI]{b1}):
$$
\begin{array}{llll}
\hspace{6mm} F & \hspace{12mm} H & \mbox{Fixed points}\\
\mbox{arbitrary} & \hspace{6mm} \Z/2, \langle x \mapsto -x \rangle & F[2]\\
F_i = \C/(\Z \oplus \Z i) & \hspace{6mm} \Z/4, \langle x \mapsto ix \rangle, i = \sqrt{-1} & 0, \frac{1+i}{2}\\
F_\rho = \C/(\Z \oplus \Z \rho) & \hspace{6mm} \Z/3, \langle x \mapsto \rho x \rangle & 0, \frac{1- \rho}{3}, \frac{\rho - 1}{3}\\
& \hspace{6mm} \Z/6, \langle x \mapsto -\rho x \rangle, \rho^3 = 1, \rho \neq 1 & 0
\end{array}
$$
Moreover, as $T$ commutes with $H$, a translation in $T$ must preserve the fixed points of automorphisms in $H$. Up to isomorphism, the variety $X$ and the group $G$ belong to one of the following
$$
\begin{array}{llll}
& \hspace{3mm} X & \hspace{6mm} G = H \oplus T & \mbox{Action of } G \mbox{ on } F\\
\mbox{(I-1)} & B \times^G F & \hspace{2mm} \Z/2 & x \mapsto -x.\\
\mbox{(I-2)} & B \times^G F & \hspace{2mm} \Z/2 \oplus \Z/2 & x \mapsto -x, x \mapsto x + \varepsilon, \varepsilon \in F[2].\\
\mbox{(I-3)} & B \times^G F & \hspace{2mm} \Z/2 \oplus \Z/2 \oplus \Z/2 & x \mapsto -x, x \mapsto x + \varepsilon_1, x \mapsto x + \varepsilon_2,\\ &&&\varepsilon_1, \varepsilon_2 \in F[2], \varepsilon_1 \neq \varepsilon_2.\\
\mbox{(I-4)} & B \times^G F_i & \hspace{2mm} \Z/4 & x \mapsto ix.\\
\mbox{(I-5)} & B \times^G F_i & \hspace{2mm} \Z/4 \oplus \Z/2 & x \mapsto ix, x \mapsto x + (\frac{1+i}{2}).\\
\mbox{(I-6)} & B \times^G F_\rho & \hspace{2mm} \Z/3 & x \mapsto \rho x.\\
\mbox{(I-7)} & B \times^G F_\rho & \hspace{2mm} \Z/3 \oplus \Z/3 & x \mapsto \rho x, x \mapsto x + (\frac{1-\rho}{3}).\\
\mbox{(I-8)} & B \times^G F_\rho & \hspace{2mm} \Z/6 & x \mapsto -\rho x.
\end{array}
$$

When $d = 2$, the subcase (I-3) will not happen, and we recover the classification of bi-elliptic surfaces.~\\

\noindent \emph{Case II.} $\Delta \neq 0$.

Then $F = \PP^1$, and $G \subset B$ is a finite \emph{abelian} subgroup of $\mathrm{PGL}(2, \C)$. It is well known that up to conjugate, a finite subgroup $G \subset \mathrm{PGL}(2, \C)$ is isomorphic to one of the following groups
$$
\Z/r ~(r \geq 1), ~\f{D}_r ~(r \geq 2), ~\f{A}_4, ~\f{S}_4 ~\mbox{ and } ~\f{A}_5.
$$
We refer to \cite{b2} for a beautiful discussion of finite subgroups of $\mathrm{PGL}(2, K)$. In our case, $G$ is either $\Z/r$ or $\f{D}_2 = \Z/2 \oplus \Z/2$ since $G$ is abelian. In suitable affine coordinates, they can be explicitly represented as follows:
$$
\begin{array}{lll}
G = \Z/r, & \langle x \mapsto \zeta_rx \rangle, ~\zeta_r \mbox{ a primitive } r\mbox{-th root of } 1.\\
G = \Z/2 \oplus \Z/2, & \langle x \mapsto -x, x \mapsto x^{-1} \rangle.
\end{array}
$$

Moreover, we have $g^*\Delta = \Delta$ since $G$ is contained in the connected algebraic group $B$. Hence $g^*(\Delta|_F) = \Delta|_F$ for all $g \in G$, i.e., $\Delta|_F$ is a \emph{$G$-invariant divisor}. Here $``="$ means equality of Weil $\Q$-divisors. Set $\Gamma := \Delta|_F$. Up to isomorphisms, the variety $X$ is isomorphic to the quotient of $B \times (\PP^1, \Gamma) = (B \times \PP^1, B \times \Gamma)$ by $G$ listed below:
$$
\begin{array}{clll}
& X & G & \Gamma\\
\mbox{(II-1)} & B \times^G (\PP^1, \Gamma) & \Z/r & a_0\!\cdot\!0 + a_\infty\!\cdot\!\infty + \sum\limits_{t=1}^ka_t \!\cdot\! (\sum\limits_{s=1}^r \zeta_r^sx_t), \mbox{ where } 0 \leq a_0, a_\infty, a_t < 1 \\&&&\mbox{are rational numbers satisfying } a_0 + a_\infty + r\sum_t a_t = -2,\\&&& \mbox{and } \zeta_r^sx_t \in \PP^1\backslash\{0, \infty\} \mbox{ are distinct points}.\\
\mbox{(II-2)} & B \times^G (\PP^1, \Gamma) & \Z/2 \oplus \Z/2 & a_1\!\cdot\!(0 + \infty) + a_2\!\cdot\!(1 + (-1)) + a_3\!\cdot\!(i + (-i)) + \sum_tb_t\!\cdot\!(\sum_{g\in G}gx_t)\\&&&\mbox{where } 0 \leq a_1, a_2, a_3, b_t < 1 \mbox{ are rational numbers satisfying}\\&&& a_1 + a_2 + a_3 + 2\sum_tb_t = -1, \mbox{ and } gx_t \in \PP^1\backslash \{0, \infty, 1, -1, i, -i\}\\ &&&\mbox{are distinct pionts}.
\end{array}
$$
\end{exam}

In contrast to Example \ref{klt}, the case of log Calabi-Yau varieties with log canonical singularities are much more complicated. Moreover, Proposition \ref{vector} and Theorem \ref{main} generally fails for log canonical pairs.

\begin{exam} \label{keyexam}
Let $C$ be an elliptic curve, $\x{E} = \oo_C \oplus L$, where $L \in \mathrm{Pic}^0(C)$ is a line bundle that is of infinite order. Let $f \colon X = \PP(\x{E}) \to C$ be the ruled surface associated to $\x{E}$. $f$ has exactly two sections $D_0$ and $D_\infty$, corresponding to the projections $\x{E} \to L \to 0$ and $\x{E} \to \oo_C \to 0$ respectively.

The pair $(X, D_0 + D_\infty)$ is log canonical (or even plt), and satisfies
$$
D_0 \sim \oo_X(1), ~ D_\infty \sim \oo_X(1) - f^*L, ~ \mbox{ and } ~ K_X + D_0 + D_\infty \sim 0. \eqno{(4)}
$$
One verifies that

(1) $h^0(X, T_X(-\log(D_0 + D_\infty))) = 2 \neq h^1(X, \oo_X) = 1$, and

(2) There is no finite morphism from a proper curve $C^\prime \to C$ so that $C^\prime \times_C X$ splits into $C^\prime \times \PP^1$.

Though $X$ does not have splitting \'etale covers, the Albanese morphism $f \colon X \to C$ is \emph{induced by action of a non-linear algebraic groups}. Indeed, $G = \aut^0(X, \Delta)$ is a semi-abelian variety which is an extension of $C$ by $\gm$, corresponding to the extension class $L \in \Hom(\Z, \pic^0(C))$, and $G \subset X$ is an equivariant compactification with boundary divisor $D_0 + D_\infty$. Moreover, $X$ splits into a product after base change to the projection $G \to C$, and the Albanese morphism $f \colon X \to C$ is the homogeneous fibration induced by the action of $G$ on $X$.
\end{exam}

\begin{exam}
Here's a similar example, where the Albanese morphism is \emph{not induced by action of non-linear algebraic groups}. We consider again a ruled surface over an elliptic curve: let $C \subset \PP^2$ be a nonsingular cubic curve, $\x{E} = \oo_C \oplus \oo_C(-1)$, and $f \colon X = \PP(\x{E}) \to C$ the associated ruled surface, finally, $D_0$ (resp. $D_\infty$) the section corresponding to the surjections $\x{E} \to \oo_C(-1) \to 0$ (resp. $\x{E} \to \oo_C \to 0$).

We see again that $(X, D_0 + D_\infty)$ is log canonical and $K_X + D_0 + D_\infty \sim 0$. However, the Albansese morphism $f \colon X \to C$ is not induced by action of a non-linear algebraic group. Indeed, there is a birational contraction $\pi \colon X \to Y$, where $Y$ is a cone over $C$, that contracts $D_0$ to the vertex of $Y$. By Blanchard's lemma \cite[Proposition 4.2.1]{bsu}, any action of a connected algebraic group $G$ on $X$ descents to an action on $Y$, hence produces a homogeneous fibration from $Y$ to an abelian variety if $G$ is not linear. While the Picard number of $Y$ is $1$, there is no non-constant morphism into Abelian varieties.

\end{exam}

From the above two examples, we are interested in extending the our method to log Calabi-Yau varieties with log canonical singularities. In this situation, the expected group action occurring is a one by a semi-abelian variety, and the splitting \'etale cover will be replaced by a smooth cover, as we have seen in Example \ref{keyexam}.


\noindent Department of Mathematical Sciences,\\
\noindent Xi'an Jiaotong-Liverpool University,\\
\noindent No.111 Renai Road, Industrial Park, Suzhou, Jiangsu Province, China\\
\noindent e-mail: jinsong.xu@xjtlu.edu.cn
\end{document}